\newtheorem{theorem}{Theorem}[section]
\newtheorem{lemma}[theorem]{Lemma}
\newtheorem{corollary}[theorem]{Corollary}
\newtheorem{definition}[theorem]{Definition}
\begin{document}

\title[]{Determinant bundle over the universal \\moduli space of principal bundles \\over the Teichm{\"u}ller space}
\begin{abstract}
For a Riemann surface $X$ and the moduli of regularly stable $G$-bundles $M$, there is a naturally occuring ``$adjoint$" vector bundle over $X \times M$. One can take the determinant of this vector bundle with respect to the projection map onto $M$. Our aim here is to study the curvature of the determinant bundle as the conformal structure on $X$ varies over the Teichm\"{u}ller space. 
\end{abstract}
\author[]{Arideep Saha} 

\address{School of Mathematics, Tata Institute of Fundamental 
Research, Homi Bhabha Road, Mumbai 400005, India} 

\email{arideep@math.tifr.res.in}
\date{}

\subjclass[2010]{32G13, 14D20}

\keywords{Moduli space, Determinant bundle, Quillen metric, Curvature.}

\maketitle

\section{Introduction}

Let $X$ be a Riemann surface of genus $g \geq 3$ with a Riemannian metric. Let $G$ be a semisimple linear algebraic group defined over $\mathbb{C}$. Let $M$ denote the moduli of regularly stable principal $G-$bundles over $X$. Let $\mathcal{T}_g$ denote the Teichm{\"u}ller space, the space of all conformal structures on a genus $g$ smooth curve. Let $\mathcal{C}_g$ denote the universal family of Riemann surfaces of genus $g$ over the Teichm{\"u}ller space $\mathcal{T}_g$. Let $\mathcal{M}$ denote the ``universal moduli space" over $\mathcal{T}_g$, which we would construct here. $\mathcal{M}$ is such that the fibre over $t \in \mathcal{T}_g$ is the moduli of regularly stable principal $G$-bundles over the Riemann surface represented by the point $t$.

\par

Our aim here is to first construct the ``universal adjoint bundle" $ad(\mathcal{P})$ over $\mathcal{C}_g \times_{\mathcal{T}_g} \mathcal{M}$. Let $p_2:\mathcal{C}_g \times_{\mathcal{T}_g} \mathcal{M} \longrightarrow \mathcal{M}$ be the projection map. Denote by $\Theta$ the top exterior product of the first direct image of $ad(\mathcal{P})$, i.e. $ \Theta :=\bigwedge ^{top}(R^1 p_{2\ast} ad(\mathcal{P}))$. By a general construction of Bismut, Gillet and Soul{\'e} \cite{B1,B2,B3} we have a hermitian metric on the line bundle $\Theta$. The above mentioned authors also give a general formula for the curvature of the connection corresponding to the constructed hermitian metric on $\Theta$.

\par

We show here that in our particular situation, the curvature form of $\Theta$ conincides with a natural $(1,1)$ form on $\mathcal{M}$. We prove that the curvature form $K(\Theta)$ has the expression:

\centerline{$K(\Theta)=-2 \pi \iota . q^{\ast} \Omega +(r/{6 \pi} ) \iota . \sigma ^{\ast} \omega$}

where $\Omega$ is a naturally occuring K\"{a}hler form on  $M$ and $\omega$ corresponds to the Weil-Petersson metric  on $\mathcal{T}_g$. The maps $q$ and $\sigma$ are projections onto $M$ and $\mathcal{T}_g$, respectively. The work here generalizes the case of vector bundles, done by Indranil Biswas \cite{B}, to the case of principal $G$-bundles. 

\par

We divide the whole content into four sections. The first section discusses some preliminary results. The second section discusses the construction and the holomorphic structure of the universal moduli space $\mathcal{M}$. The third section discusses the construction of the universal adjoint bundle and the determinant bundle with the connection corresponding to the hermitian structure. Finally, in the fourth section we calculate the curvature of the connection using the formula given by Bismut, Gillet and Soul{\'e}. 

\textbf{Acknowledgement.} I would like to thank Prof. I. Biswas for many helpful discussions. I would also like to thank Prof. N. Fakhruddin for pointing out the specific reference \cite{F}.

\section{Preliminaries}

Before going into the details of the construction of universal moduli space we briefly recall a few definitions and results related to $G$-bundles in the following subsection. 

\subsection{$G$-bundles}

In the following definitions and results of this subsection we shall assume $G$ to be any linear algebraic group over $\mathbb{C}$, i.e. not necessarily semisimple.

\begin{definition}
(principal $G$-bundle) An algebraic principal bundle over a scheme $X$ with structure group $G$  is a scheme $E$ on which there is a right action of $G$ and a $G$-invariant morphism $\pi:E \longrightarrow X$ such that for any point $x \in X$ there is a Zariski open neighbourhood $U$ and a faithfully flat morphism $f:U' \longrightarrow U$ and an $G$-equivariant isomorphism $\phi:f^{\ast}(E) \longrightarrow U' \times G$ over $U'$. Here $G$ operates on $U' \times G$ by right translation on the second factor.   
\end{definition}  

\par

Since in our case $G$ is a linear algebraic group (i.e. affine) and $X$ is a smooth Riemann surface, we note down a simplification of the above definition.

\par

\textbf{Note:-} Since $G$ is a linear algebraic group any analytic $G$-bundle on the compact Riemann surface $X$ has a unique algebraic $G$-bundle structure. Also any analytic morphism between two $G$-bundles over $X$ is an algebraic morphism. Because of this equivalence of categories, henceforth we would freely interchange between algebraic and analytic category of principal $G$-bundles on $X$ and call it a $G$-bundle without specifying whether it belongs to algebraic category or analytic category. 

\par

Next we define the notion of stability (semistability) as introduced by Ramanathan \cite{R1}. Recall that a subgroup $P$ of $G$ is called a $parabolic$ subgroup of $G$ if $G/P$ is a complete variety.

\begin{definition}
A $G$-bundle $E \longrightarrow X$ is called stable (semistable) if for any reduction of structure group to any maximal parabolic subgroup $P$, i.e. $\sigma : X \longrightarrow E/P$, we have deg $\sigma ^{\ast} (T_{E/P}) > 0$ ($\geq 0$), where $T_{E/P}$ is the tangent bundle along the fibres of $E/P \longrightarrow X$. 
\end{definition}

\begin{definition}
A $G$-bundle $E \longrightarrow X$ is called regularly stable if the automorphism group of $E$ is equal to $Z$, the center of $G$.
\end{definition}

Next we recall the definitions of connection and curvature on a $G$-bundle.

\begin{definition}
(Connection) Let $E$ over $X$ be a $G$-bundle, then a $G$-connection on $E$ is a section $\bar{h}: T_{X} \longrightarrow T_{E}/G$.
\end{definition}

\begin{definition}
(Curvature) Let $h$ be a connection on a principal $G$-bundle $E$ over $X$. Then the curvature $K(h)$ of the connection $h$ is defined by $K(h)(X,Y)=[h(X),h(Y)]-h[X,Y]$, where $X,Y \in T_X$ and $[ , ]$ is the lie algebra structure on the respective tangent spaces. 
\end{definition}

\par

We now recall and discuss briefly the results of Ramanathan \cite{R1} which relates moduli of semistable $G$-bundles on a Riemann surface $X$ and certain representations of $\pi _1 (X)$, the fundamental group of $X$. 

\begin{definition}
Let $H \subset K$ be a subgroup of a maximal compact subgroup $K$. $H$ is said to be \textbf{\textit{irreducible}} if \\

\centerline{$\Big \{ g \in G | hgh^{-1} = g, \forall h \in H \Big \} = $center of $G.$}
\end{definition}

\begin{definition}
A representation $\rho: \pi \longrightarrow K$ is said to be irreducible if the image subgroup is irreducible. 
\end{definition}

\textbf{Proposition 7.7} of \cite{R1} coupled with \textbf{Theorem 7.1} of [R1] says that, for $G$ semisimple, there is a 1-1 correspondence between regularly stable $G$-bundles and the irreducible representations of $\pi_1(X)$, modulo equivalences.

\par

\textbf{Note:} The definition of \textit{irreducibility}, given in this article, differs slightly from the definition given in \cite{R1}. Hence, although irreducible representations (as defined in \cite{R1}) correspond to stable $G$-bundles, we would be considering the subset of stable 
$G$-bundles, namely the \textit{regularly stable} $G$-bundles, as they precisely correspond to representations irreducible in our sense. Also, when the genus of $X$ is $ \geq 3$, the smooth locus in the moduli of $G$-bundles coincides with the regularly stable $G$-bundles.

\subsection{Teichm\"{u}ller space}

In this subsection we recall some known facts about the Teichm\"{u}ller spaces.

\par

Let $S$ be a compact connected oriented $C^{\infty}$ surface of genus $g \geq 2$. The space of all complex structures on $S$ compatible with the orientation of $S$, denoted by $Com(S)$, has a natural structure of an infinite dimensional complex Fr\'{e}chet manifold. 

\par

Let $Diff^{+}(S)$ be the group of all orientation preserving diffeomorphism of $S$. $Diff^{+}(S)$ has a natural action on $Com(S)$, given by the pushforward of a complex structure by a diffeomorphism. Let $Diff^{0}(S) \subset Diff^{+}(S)$ be the subgroup consisting of all the diffeomorphisms of $S$ which are homotopic to the identity map.The Teichm\"{u}ller space for $S$, denoted by $\mathcal{T}_g$, is defined as \\

\centerline{$\mathcal{T}_g = Com(S)/Diff^{0}(S)$} 

For $f \in Diff^{+}(S)$ we know that $f$ preserves the complex structure of $Com(S)$. Hence there is a natural induced complex structure on $\mathcal{T}_g$.

\par

To give a more elaborate description of $\mathcal{T}_g$, consider pairs of the form $(X,f)$ where $X$ is a Riemann surface and $f:X \longrightarrow S$ is a diffeomorphism. Identify two pairs $(X,f)$ and $(Y,g)$ if there is a biholomorphic map $h:X \longrightarrow Y$ satifying the commuting diagram

\[
\begin{tikzcd}
X \arrow{d}{f} \arrow{r}{h} & Y \arrow{d}{g}\\ 
S \arrow{r}{h'} & S
\end{tikzcd}
\]

with $h' \in Diff^{0}(S)$. The Teichm\"{u}ller space $\mathcal{T}_g$ is the moduli space of equivalence classes of such pairs.

\par

For any point $t(=(X,f))\in \mathcal{T}_g$, the holomorphic tangent space $T_{\mathcal{T}_g ,t}$ can be identified with $H^{1}(X,T_{X})$. By Serre duality we have $T_{\mathcal{T}_g ,t} ^{\ast} = H^{0}(X,K_{X}^{2})$, where,  $K_X$ is the holomorphic cotangent bundle on $X$. We also know that $X$ has a unique metric with curvature -1, known as the Poincar\'{e} metric, which we denote by $g_X$. Using the Poincar\'{e} metric $g_X$, we give a bilinear pairing $Q$ on $H^0(X,K_{X}^{2})$ defined by \\

\centerline{$Q(\alpha , \beta)=\int _{X} \alpha \otimes \bar{\beta} \otimes g_{X}^{-1}$}

The above pairing $Q$ defines a hermitian metric. In this way we get a Riemannian metric on $\mathcal{T}_g$. It can be shown that this metric, known as the \textit{Weil-Petersson} metric, is a K\"{a}hler metric.

\section{Universal moduli space}

\par

  Henceforth we fix the notations and conventions introduced in the introduction.
  
\par

  Let $K$ be a maximal compact subgroup of $G$. By Ramanathan's result (Proposition 7.7 \cite{R1}) we know, $M$= $Hom_{ir}(\pi_1(X),K)/K$ where $Hom_{ir}$ means the irreducible representations and the action of $K$ on $Hom_{ir}(\pi_1(X),K)$ comes via its conjugation action on $K$. We also know that there is a complex structure on $M$ (infact an algebraic structure) since $M$ is the moduli of $G$-bundles over the algebraic curve $X$.

\par

Our aim in this section is to construct a complex manifold $\mathcal{M}$ with a holomorphic map \\

\centerline{$\phi$:$\mathcal{M} \rightarrow\mathcal{T}_g $}

 such that fibre over t $\in \mathcal{T}_g$ is the moduli of regularly stable $G$-bundles over $X_t$. We denote by $M_{t}$ the complex manifold of moduli of regularly stable $G$-bundles on $X_t$. We denote $Hom_{ir}(\pi_1(X_t),K)/K$ by $R$ which is the underlying smooth manifold for $M_{t}$ for all $t \in \mathcal{T}_g$. Let us denote by $S$ the underlying Riemann surface of $X_t$. Note that $X_t$ is diffeomorphic to $S$ for all $t \in \mathcal{T}_g$. 
  
\par

  To construct $\mathcal{M}$ as a complex manifold we give a complex structure on $\mathcal{T}_g\times R$ such that $\phi$ becomes holomorphic. So first, we shall give an almost complex structure and then show it is integrable. 
  
\par

  Let $(t,m)\in \mathcal{T}_g\times R$. Since $\mathcal{T}_g$ is a complex manifold it has an (integrable) almost complex structure $J_1$, say.  Similarly, since $t\times R$ has the structure of the complex manifold $M_{t}$, $t\times R$  has an (integrable) almost complex structure, $J_t$ say.  The two almost complex structures, $J_1$ and $J_t$ give an almost complex structure on $T_{(t,m)}(\mathcal{T}_g\times R)$ given by $J_1\oplus J_t$. This also shows  $T^{(1,0)}_{(t,m)}(\mathcal{T}_g\times R)= T^{(1,0)}_{(t)}(\mathcal{T}_g)\oplus T^{(1,0)}_{t,m}(R)$. Note that the almost complex structure on $T^{(1,0)}_{t,m}(R)$ is dependent on $t \in \mathcal{T}_g$.

\par

To show integrability we use Newlander-Nirenberg criterion. We show that if $\chi , \psi \in T^{(1,0)}_{(t,m)}(\mathcal{T}_g \times R)$, then $[\chi, \psi] \in T^{(1,0)}_{(t,m)}(\mathcal{T}_g \times R)$. Although the integrability can be shown by a direct calculation, we use construction of moduli spaces by Faltings' (see \cite{F}) to quickly understand the integrability of the almost complex structure on $\mathcal{T}_g \times R$.
  
\par

We first recall the definition of the mapping class group, denoted by $MCG$. For a $C^{\infty}$ Riemann surface $S$ the mapping class group is defined as:\\

\centerline{$MCG=Diff^{+}(S)/Diff^0(S)$.}

where $Diff^{+}(S)$ and $Diff^{0}(S)$ are, as defined at the beginning of the subsection 2.2, orientation preserving diffeomorphisms of $S$ and diffeomorphisms homotopic to the $identity$ morphism of $S$ respectively.

\par

We have an action of $MCG$ on $\mathcal{T}_g$ and $R$ which preserves the respective almost complex structures. The action of $MCG$ on $\mathcal{T}_g$ is the natural action which is induced from the action which pulls back complex structures on $S$ via diffeomorphisms. $MCG$ acts on the representation space $R$ via the action of diffeomorphisms on $\pi_{1}(S)$. Here we note that, although for each $t \in \mathcal{T}_g$, $R$ has a different almost complex structure, the action of $MCG$ preserves all such complex structures. 

\par

Let $U \subset \mathcal{T}_g/MCG$ be the set of curves with no non-trivial (holomorphic) automorphisms. $U$ is zariski open in $\mathcal{T}_g/MCG$. Using the constructions of moduli of $G$-bundles over curves parametrized by varieties as in \cite{F}, we have universal moduli of $G$-bundles, $\bar{\mathcal{M}}$, over curves parametrized by $U$. The algebraic structure of $\bar{\mathcal{M}}$ gives a complex structure, and hence, an integrable almost complex structure on $\bar{\mathcal{M}}$ defined over the complex manifold $U$. 

\par

Let, \\
\centerline{$q: \mathcal{T}_g \longrightarrow \mathcal{T}_g/MCG$}

be the quotient map. Both $U$ and $q^{-1}(U)$ are dense in $\mathcal{T}_g/MCG$ and $\mathcal{T}_g$ respectively. We know that the action of $MCG$ on $q^{-1}(U)$ is discrete and $q^{-1}(U)$ is a covering space over $U$. 

\par

We note that, $q^{\ast}\bar{\mathcal{M}}$, which is the moduli of $G$-bundles over curves parametrized by $q^{-1}(U)$, is canonically diffeomorphic to  $q^{-1}(U) \times R$. The diffeomorphism is given by identifying $R$ with the fibre $q^{\ast}\bar{\mathcal{M}}_u$ over $u \in q^{-1}(U)$ using the results of Ramanathan which identifies moduli of $G$-bundles and the representation space. Here, to see that $q^{\ast}\bar{\mathcal{M}}$ is the moduli of $G$-bundles parametrized by curves over $q^{-1}(U)$ we use that $q^{-1}(U) \longrightarrow U$ is a covering space.

\par

The integrable almost complex structure on $ \bar {\mathcal{M}}$ gives an integrable almost complex structure on $q^{\ast}\bar{\mathcal{M}}$, which eventually gives an integrable almost complex structure on $q^{-1}(U) \times R$.  Now using that $q^{-1}(U)$ is dense in $\mathcal{T}_g$, if we have an integrable almost complex structure on $q^{-1}(U) \times R$ which extends to an almost complex structure on $\mathcal{T}_g \times R$, we can say the extended almost complex structure is integrable.

\par

Now what remains to show is that, the almost complex structure on $\mathcal{T}_g \times R$ we started with, coincides with this integrable almost complex structure we get from the moduli constructions. We shall denote $q^{-1}(U) \times R$ with the integrable almost complex structure as $\mathcal{M} '$.

\begin{lemma}
Fix a representation $\rho \in R$. Then we have a holomorphic section $s:q^{-1}(U) \longrightarrow \mathcal{M} '$, such that, $s(q^{-1}(U))=q^{-1}(U) \times \rho$ .
\end{lemma}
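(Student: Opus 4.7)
My approach is to construct an explicit holomorphic family of regularly stable $G$-bundles over the relative curve $\mathcal{C}_g|_{q^{-1}(U)} \to q^{-1}(U)$ whose fibre over $t$ is (the equivalence class of) the flat $G$-bundle with monodromy $\rho$, and then to extract $s$ as the induced classifying map to the moduli.

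First I would use that the Teichm\"uller space is contractible. From the homotopy exact sequence of the fibration $S \hookrightarrow \mathcal{C}_g \to \mathcal{T}_g$, together with $\pi_1(\mathcal{T}_g) = \pi_2(\mathcal{T}_g) = 0$, one obtains $\pi_1(\mathcal{C}_g) \cong \pi_1(S)$. Hence the fixed representation $\rho: \pi_1(S) \to K \subset G$ yields a flat principal $G$-bundle $\mathcal{E}_\rho$ on the complex manifold $\mathcal{C}_g$, together with a flat $K$-connection. As $\mathcal{E}_\rho$ is locally trivialized by flat sections on a complex base, it is canonically a holomorphic $G$-bundle, and its restriction to each fibre $X_t$ is the flat holomorphic $G$-bundle whose monodromy is $\rho$. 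Since $\rho$ is irreducible in our sense, Ramanathan's correspondence recalled in Section 2 identifies this fibre with the regularly stable $G$-bundle classified by $\rho \in R$.

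Next I would invoke the moduli construction of Faltings. The restriction of $\mathcal{E}_\rho$ to $\mathcal{C}_g|_{q^{-1}(U)}$ is a holomorphic family of regularly stable $G$-bundles along the family of smooth projective curves parametrized by $q^{-1}(U)$. By the coarse moduli property of $\bar{\mathcal{M}} \to U$ established in \cite{F}, pulled back along $q$, this family induces a holomorphic classifying morphism $\tilde{s}: q^{-1}(U) \to q^{\ast}\bar{\mathcal{M}}$. Transporting $\tilde{s}$ along the diffeomorphism $q^{\ast}\bar{\mathcal{M}} \cong q^{-1}(U) \times R = \mathcal{M}'$, which by the very construction of the complex structure on $\mathcal{M}'$ is a biholomorphism, I obtain a holomorphic map $s: q^{-1}(U) \to \mathcal{M}'$. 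By Ramanathan's identification the fibre $\mathcal{E}_\rho|_{X_t}$ is sent to the point $\rho \in R$, so $s(t) = (t, \rho)$. Thus $s$ is a holomorphic section with image $q^{-1}(U) \times \{\rho\}$, as required.

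I expect the only delicate step to be the invocation of the coarse moduli property: because regularly stable $G$-bundles can have automorphism group equal to the center $Z(G)$, the space $\bar{\mathcal{M}}$ is in general only a coarse (not fine) moduli, so no globally defined tautological family exists. Nevertheless, the existence of a holomorphic classifying map out of the base of any holomorphic family of regularly stable objects is exactly the coarse moduli property established in \cite{F}, so no new obstruction arises. The construction of the flat bundle $\mathcal{E}_\rho$ and the identification of its fibres via Ramanathan's theorem are both formal once $\pi_1(\mathcal{C}_g) = \pi_1(S)$ is in hand.
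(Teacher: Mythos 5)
Your proposal is correct and follows essentially the same route as the paper: build the flat (hence holomorphic) $G$-bundle on $\mathcal{C}_g$ from $\rho$ using $\pi_1(\mathcal{C}_g)\cong\pi_1(S)$, then obtain $s$ from the (coarse) moduli property of the Faltings construction. You merely spell out the homotopy-exact-sequence argument and the coarse-versus-fine moduli point, which the paper leaves implicit.
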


\begin{proof}
Since $\mathcal{T}_g$ is simply connected, the fundamental group of $\mathcal{C}_g$, curves parametrized by $\mathcal{T}_g$, is same as $\pi_1(S)$. Using the representation $\rho$, we can construct (construction is similar as for a single curve $S$) a holomorphic principal $G$-bundle parametrized by $\mathcal{T}_g$. This gives a holomorphic principal $G$ bundle parametrized by $q^{-1}(U)$. Hence, by the construction of moduli of $G$-bundles, we get a holomorphic map $s:q^{-1}(U) \longrightarrow \mathcal{M}'$ such that the image gives the $G$-bundles coming from the fixed representation $\rho$.
\end{proof}

\par

Using the above lemma we conclude that $i:q^{-1}(U) \times \rho \longrightarrow \mathcal{M}'$ is a holomorphic map $\forall \rho \in R$. Also for $t \in \mathcal{T}_g$, the almost complex structures of the fibres ${\mathcal{M}}_t'$ and ${\mathcal{M}}_t$ come from the complex structure of $M_{t}$ (moduli of $G$-bundles on the Riemann surface $X_t$). Hence we conclude that the almost complex structures of $\mathcal{M}'$ and $\mathcal{M}$ coincide, which finally gives the integrability of the almost complex structure on $\mathcal{M}$, as $q^{-1}(U)$ is dense in $\mathcal{T}_g$.

\par

 The holomorphicity of the map $\phi$ follows because it is a projection onto the first n coordinates of $\mathcal{M}$.

\section{Construction of universal adjoint bundle and determinant bundle}

\par

After constructing $\mathcal{M}$,  the universal moduli of  $G-$bundles over $\mathcal{T}_g$, in the previous section, our aim in this section is to construct the universal projective bundle over $\mathcal{C}_g \times _{\mathcal{T}_g}{\mathcal{M}}$ with a complex structure. We have, as introduced earlier, $G$ a semisimple linear algebraic group and $K \subset G$ to be a maximal compact subgroup. Let $Z$ denote the centre of $G$ and $K$ respectively. The centre of both $G$ and $K$ are same because $K$ is a maximal compact subgroup of $G$.

\par

We also fix the following notations: $X$ will denote a Riemann surface of genus $g \geq3$ with $S$ the underlying smooth surface. $M$ will denote the moduli of regularly stable principal $G-$bundles on $X$, $R$=$Hom_{ir}(\pi _1(X),K)/K$  and $\tilde{R}=Hom_{ir}(\pi _1 (X), K) $. 

\par

First we construct a smooth $K/Z$ bundle on $X \times M$ such that by extending the structure group we have a holomorphic $G/Z$ bundle. Then we have a unique connection $\nabla(t)$ on the $K/Z$ bundle corresponding to the holomorphic structure of the $G/Z$ bundle. Here the letter $t$ is used in $\nabla(t)$ to denote that $X$ and $M$ have the complex structure corresponding to the point $t \in \mathcal{T}_g$, the Teichm\"{u}ller space. 

\begin{lemma}
There exist a smooth principal $K/Z$ bundle $P$ over $X \times R$.
\end{lemma}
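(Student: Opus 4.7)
The plan is to construct $P$ as the quotient of a trivial principal $K$-bundle on $\tilde{X} \times \tilde{R}$ by commuting actions of the deck group $\pi_1(X)$ (which recovers the topology of $X$) and of $K$ (which implements the descent from $\tilde{R}$ to $R$). Let $\pi \colon \tilde{X} \to X$ be the universal cover, and equip the trivial bundle $\tilde{X} \times \tilde{R} \times K \to \tilde{X} \times \tilde{R}$ with two commuting left actions: the deck action $\gamma \cdot (x, \rho, k) = (\gamma x, \rho, \rho(\gamma) k)$ for $\gamma \in \pi_1(X)$, and the conjugation action $h \cdot (x, \rho, k) = (x, h \rho h^{-1}, h k)$ for $h \in K$. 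A direct computation shows that these two actions commute with each other and each commutes with the principal right action $(x, \rho, k) \cdot k' = (x, \rho, k k')$ of $k' \in K$.

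Quotienting first by the deck action produces a smooth principal $K$-bundle $E$ over $X \times \tilde{R}$ whose fibre over $(x, \rho)$ is identified with the fibre at $x$ of the bundle $\tilde{X} \times_{\rho} K$ attached to $\rho$ by Ramanathan's construction. The residual left $K$-action on $E$ covers the conjugation action of $K$ on $\tilde{R}$. Because $\tilde{R}$ consists of irreducible representations, the stabilizer in $K$ of any $\rho$ equals $Z$, by the definition of irreducibility recalled in Section~2; and since $Z$ is central in $K$, the left action on $E$ of any $z \in Z$ coincides with the right action of $z$. Thus, after passing to the principal $K/Z$-bundle $E/Z$ over $X \times \tilde{R}$ (quotienting by the right $Z$-action), the residual left action descends to an action of $K/Z$ on $E/Z$ that is now free and still commutes with the right principal $K/Z$-action.

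Finally, I set $P := (E/Z)/(K/Z)$, where the quotient is taken by the descended left (conjugation) action. Compactness of $K/Z$ makes this action proper, and freeness has been arranged in the previous step, so $P$ is a smooth manifold and the projection $P \to X \times R = X \times (\tilde{R}/K)$ is a submersion with fibre $K/Z$. The commuting right $K/Z$-action descends to $P$ and realises it as the desired smooth principal $K/Z$-bundle; by construction, the fibre of $P$ over $(x, [\rho])$ is identified with the fibre at $x$ of the $K/Z$-bundle associated to $\rho$. The main technical obstacle is to ensure that $\tilde{R}$ is a smooth manifold along its irreducible locus and that the $K/Z$-action there is genuinely free, so that the two quotients are taken in the smooth category; these are classical properties of the character variety of a surface group, which together with our definition of irreducibility provide exactly what is needed.
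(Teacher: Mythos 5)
Your construction is essentially identical to the paper's: both take the trivial $K$-bundle over $\tilde X\times\tilde R$, quotient by the deck action twisted by $\rho(\gamma)$, and then descend along $\tilde R\to R=\tilde R/K$ using that irreducibility forces the conjugation stabilizer to be exactly the central subgroup $Z$, whose left action on the fibre agrees with the right action. Your version is in fact slightly more careful than the paper's (you write out the commuting $K$-action explicitly, where the paper only says ``translation on $K$,'' and you separate the quotient by right $Z$ from the quotient by the residual free $K/Z$-action), but it is the same argument.
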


\begin{proof}
Let $\tilde{X}$ be the universal cover of $X$. Then $\tilde{X} \times \tilde{R} \times K$ is a trivial principal bundle over 
$\tilde{X} \times \tilde{R}$. We have an action of $\pi_1(X)$ on $\tilde{X} \times \tilde{R}$ given by a trivial action on 
$\tilde{R}$ and usual action on $\tilde{X}$. This action of $\pi_1(X)$ can be lifted to give an action of $\pi_1(X)$ on $\tilde{X} \times \tilde{R} \times K$. To see this we choose $\sigma 
\in  \pi_1(X)$, $f \in \tilde{R}$ and $k \in K$. Then $\sigma$ takes $(f,k)$ to $(f,f(\sigma)k)$. Let the map $\tilde p: \tilde{X} \times \tilde{R} \times K \longrightarrow \tilde{X} \times \tilde{R}$  be the projection map . Going modulo the action of $\pi_1(X)$ on $\tilde{X} \times \tilde{R} \times K$ and $\tilde{X} \times \tilde{R}$ we have a map $p:\tilde{P} 
\longrightarrow X \times \tilde{R}$, where $\tilde{P}$ is a principal $K$ bundle over $X \times \tilde{R}$ and the map $p$ is induced from $\tilde{p}$. 

\par

$\tilde{P}$  is a smooth K-bundle because the action of $\pi_1(X)$ is properly discontinuous. Recall that the representation space $R$ is the quotient of $\tilde{R}$ by $K$, where the action of $K$ on $\tilde{R}$ is via the conjugation action of $K$ on itself. $\bold {\underline {NOTE}}$ that the center $Z$, of $K$, is precisely the subgroup acting trivially because we are looking at irreducible representations of $\pi_1(X)$. 

\par

Recall that the action of $K$ on $\tilde{X} \times \tilde{R} \times K$ is via translation on $K$, hence the action does not have a fixed point. So the induced action of $K$ on $\tilde{P}$ also does not have any fixed point. We have already shown that $Z$ acts trivially on $X \times \tilde{R}$. Hence taking quotient of $\tilde{P}$ and $X \times \tilde{R}$ by $K$ with respect to the respective actions we have a map $\bar{p}: P \longrightarrow X \times R$ where $P(= \tilde{P}/K)$ is a  $K/Z$ bundle. 
\end{proof}

\par

In the following lemma we discuss a natural holomorphic structure on $P(G/Z)$, the extension of the structure group of $P$ by $G/Z$, which is induced from the holomorphic structures of $X,M$ and $G/Z$. Before we discuss the proof of the natural holomorphic structure on $P(G/Z)$, we state a lemma which would be needed in the construction of such holomorphic structure.

\begin{lemma}
Let $G,H$ be two linear algebraic groups. Let $X$ be a holomorphic manifold. Let $P_G$ denote a principal $G$-bundle on $X$ and $P_H$ denote a principal $H$-bundle on $P_G$ such that the action of $G$ on $P_H$ commutes with the action of $H$ on $P_H$. Let $D_G: T_{P_G} \longrightarrow T_{P_H}/H$ denote a $G$-invariant connection and $h: T_X \longrightarrow T_{P_G}/G$ be another connection. Then $D_G$ induces a connection $D: T_X \longrightarrow T_{P_H /G}/H$.   
\end{lemma}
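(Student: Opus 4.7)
The plan is to build $D$ as a composition of three naturally defined maps of Atiyah-type bundles over $X$, and then to verify by a diagram chase that the composition splits the Atiyah sequence of $P_H/G \to X$.

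First, I would set up the target. Since the $G$-action on $P_H$ commutes with the $H$-action and is free (being a lift of the free $G$-action on $P_G$), the quotient $P_H/G$ is a smooth manifold on which $H$ still acts freely, and $P_H/G \to X$ is a principal $H$-bundle. The connection $D$ is required to split its Atiyah sequence
\begin{equation*}
0 \longrightarrow \operatorname{ad}(P_H/G) \longrightarrow T_{P_H/G}/H \longrightarrow T_X \longrightarrow 0.
\end{equation*}

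Next, I would construct two intermediate arrows over $X$. By the assumed $G$-invariance, $D_G$ descends to a morphism of vector bundles over $X$,
\begin{equation*}
\bar{D}_G : T_{P_G}/G \longrightarrow (T_{P_H}/H)/G.
\end{equation*}
Separately, the differential of $\pi : P_H \to P_H/G$ is surjective, $H$-equivariant, and $G$-invariant (its kernel is precisely the $G$-vertical subbundle), so it induces a natural surjection
\begin{equation*}
\psi : (T_{P_H}/H)/G \longrightarrow T_{P_H/G}/H
\end{equation*}
of vector bundles on $X$. I would then set $D := \psi \circ \bar{D}_G \circ h$.

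Finally, I would check that $D$ splits the Atiyah sequence above. This is a diagram chase: the projection $T_{P_H/G}/H \to T_X$ factors, after the appropriate quotients, through the Atiyah projections of $P_H \to P_G$ and of $P_G \to X$; since $D_G$ splits the first and $h$ splits the second by hypothesis, the composition $\psi \circ \bar{D}_G \circ h$ splits the composite, which is what is required.

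The subtlest point is the construction and well-definedness of $\psi$. One must verify that the $G$-vertical kernel of $d\pi$ interacts cleanly with the $H$-quotient, so that taking the $H$-quotient and then the $G$-quotient of $T_{P_H}$ gives the same object as taking the differential of $P_H \to P_H/G$ and then the $H$-quotient. This interchange of quotients is exactly what the commutation of the $G$- and $H$-actions on $P_H$ provides, together with the freeness of the $G$-action. Once $\psi$ is in hand, the remaining verifications are formal.
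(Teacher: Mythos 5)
Your construction is exactly the paper's: the author also defines $D = \overline{dq} \circ \overline{D_G} \circ h$, where $\overline{dq}$ is your $\psi$ (induced from the differential of the quotient map $P_H \to P_H/G$ using commutativity of the two actions) and $\overline{D_G}$ is the descent of $D_G$ by $G$-invariance. Your added verification that the composite splits the Atiyah sequence of $P_H/G \to X$ is a worthwhile detail the paper leaves implicit, but the approach is the same.
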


\begin{proof}
We have the following diagram
\[
\begin{tikzcd}
P_H \arrow{d}{q} \arrow{r} & P_G \arrow{d} \\
P_H/G \arrow{r} & X
\end{tikzcd}
\]
where $q$ is the quotient map. Let $dq:T_{P_H} \longrightarrow T_{P_H/G}$ denote the corresponding map at the level of tangent spaces. Since the action of $G$ and $H$ commute, $dq$ is $H$ equivariant, and hence we have a map $\bar{dq}:T_{P_H}/G/H \longrightarrow T_{P_H/G}/H$. Since, by assumption, $D_G$ is $G$-invariant, we have $\bar{D_G}:T_{P_G}/G \longrightarrow T_{P_H}/H/G$. Then $D$ can be defined by $D=\bar{dq} \circ \bar{D_G} \circ h$. Thus giving the desired connection.
\end{proof}

\textbf{Note.} Sometimes we can construct the connection $D_G$, as mentioned in the above lemma, as a $G$-invariant connection. Even in that case the existence of section like $h$ is needed to construct connections on spaces we get through taking quotient by $G$ .   

\par

We recall that giving a complex structure on $P(G/Z)$ is equivalent to giving a connenction on $P(K/Z)$ with a $(1,1)$ curvature form. 

\begin{lemma}
$P(G/Z)$ exists as a holomorphic $G/Z$ bundle over $X \times M$. 
\end{lemma}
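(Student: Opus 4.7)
The plan is to construct a smooth connection $\nabla$ on the principal $K/Z$-bundle $P\to X\times M$ whose curvature is a $(1,1)$-form with respect to the product complex structure, and then invoke the correspondence noted in the preceding remark between such connections on $P$ and holomorphic structures on the extension $P(G/Z)$.

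For the construction of $\nabla$, I would start from the $K$-bundle $\tilde P\to X\times\tilde R$ produced in the proof of Lemma 3.1. In the $X$-direction one has on each fiber $X\times\{f\}$ the flat connection associated to the representation $f$, and these fit together into a smooth partial connection in the $X$-direction. A complementary distribution in the $\tilde R$-direction can be built by identifying a tangent vector at $f$ with its harmonic $ad(\rho_f)$-valued $1$-form representative (with respect to the Poincar\'e metric on $X$), which provides a canonical horizontal lift. This furnishes a $K$-connection on $\tilde P$; applying Lemma 3.2 with $G=K$ acting on $\tilde R$ by conjugation and on $\tilde P$ by inner automorphisms, and $H=K/Z$, then produces a $K/Z$-connection $\nabla$ on $P\to X\times R$.

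To verify that $K(\nabla)$ has type $(1,1)$, decompose $T(X\times M)=p_X^{\ast}T_X\oplus p_M^{\ast}T_M$ and analyze the three resulting blocks of the curvature. The $XX$-block vanishes because $\nabla$ restricts on each slice $X\times\{m\}$ to the flat connection of $\rho_m$, so in particular its $(0,2)$-part is zero. For the $MM$- and $XM$-blocks, the identification $T_mM\cong H^1(X,ad(\rho_m))$ by harmonic representatives matches the complex structure on $M$ with the Hodge $\ast$-operator on these $1$-forms; this compatibility forces the $(0,2)$-parts of both blocks to vanish, and the $(2,0)$-parts then vanish by reality of the curvature.

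The main obstacle is the last point: checking that the mixed $XM$-block of the curvature has vanishing $(0,2)$-component, since it measures how the flat connections on $X$ vary as $m$ moves in $M$ and requires the Hodge-theoretic identification $T_mM\cong H^1(X,ad(\rho_m))$ to interact correctly with both complex structures. Once this is in place, the conclusion that $P(G/Z)$ carries a holomorphic $G/Z$-bundle structure whose Chern connection recovers $\nabla$ is formal, and the lemma follows.
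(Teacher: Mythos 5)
Your overall strategy --- produce a connection on the $K/Z$-bundle whose curvature has type $(1,1)$ and then invoke the correspondence with holomorphic structures on $P(G/Z)$ --- is the same as the paper's, and your use of harmonic representatives of $H^1(X,ad(\rho))$ to supply the horizontal directions over the moduli, followed by descent through the connection-induction lemma (Lemma 4.2 in the paper's numbering), closely parallels the paper's use of the inclusion $T_M\hookrightarrow T_{Conn}/Aut(E)$ by harmonic forms. The one real difference of route is the ambient model: you work upstairs on $X\times\tilde R$ with the conjugation action of $K$ on the representation variety, while the paper works in the Atiyah--Bott picture $M=Conn/Aut(E)$, building an $Aut(E)$-equivariant connection on $X\times Conn\times K$ and descending. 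Either model is serviceable for the construction of the connection itself.

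The genuine gap is exactly the point you flag and then leave open: the vanishing of the $(0,2)$-component of the mixed $X$--$M$ block of the curvature. Saying that the identification $T_mM\cong H^1(X,ad(\rho_m))$ by harmonic representatives ``forces'' this vanishing is a restatement of what must be proved, not a proof; this block records the first-order variation of the flat connection in the moduli directions, and its type is precisely what separates the holomorphic deformation theory from the merely smooth one. The paper closes this by extending the structure group along the adjoint representation $G\to Aut(\mathfrak{g})$: since $G$ is semisimple, this representation is faithful on the Lie algebra, so the type of the curvature of the $K/Z$-connection can be read off from the induced connection on the associated adjoint vector bundle, where the $(1,1)$ property is already known from the vector-bundle theory (Atiyah--Bott, Quillen). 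To complete your argument you would either have to carry out the mixed-block computation honestly --- contracting the curvature against a tangent vector of $X$ of type $(0,1)$ and a harmonic $ad(\rho)$-valued form of type $(0,1)$ and showing the result vanishes --- or import the vector-bundle result through the adjoint representation as the paper does.
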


\begin{proof}
Let $E$ be the trivial principal $G$-bundle over the Riemann surface $X$. Let $Conn$ be the space of all \textit{irreducible flat} connections on $E$ giving $E$ the structure of a regularly stable principal bundle. $Conn$ is an infinite dimensional analytic space with a vector space structure. Let $Aut(E)$ denote the $C^{\infty}$ automorphisms of $E$. We know that $M=Conn/Aut(E)$ (see \cite{AB}) as holomorphic manifolds where $M$ denotes the moduli of regularly stable $G$-bundles.

\par

Consider the following diagram

\[
\begin{tikzcd}
X \times Conn \times K \arrow{r} \arrow{d}{q} & X \times Conn \arrow{d} \\
P(K/Z) \arrow{r} & X \times M
\end{tikzcd}
\]

\par

where the vertical maps are obtained by taking quotients by $Aut(E)$ action. 

\par

Note that for $k \in Z$, multiplication by $k$ induces an automorphism of $E$. Since $E$ is regularly stable for connections belonging to $Conn$, the subgroup of $G$ inducing automorphisms of $E$ is precisely $Z$. Also $X \times Conn$ is fixed by elements of $Z$. Hence taking a quotient of $X \times Conn \times K$ by $Aut(E)$ we have a principal $K/Z$ bundle over $X \times M$.

\par

There is the canonical connection $D:T_{X \times Conn} \longrightarrow T_{X \times Conn \times K}/K$ which can be described as follows. For a fixed point $t \in Conn$ $D$ is defined by $t$ and $D$ is defined as $identity$ for a fixed $x \in X$. Then $D$, by definition, becomes $Aut(E)$-equivariant.

\par

Also recall that there is a inclusion $T_{M} \hookrightarrow T_{Conn}/Aut(E)$ where $T_{M}$ is the space of all \textit{Harmonic forms}. Hence we get a connection $h: T_{X \times M} \longrightarrow T_{X \times Conn}/Aut(E)$. Then using Lemma 4.2. we have a connection on the $K/Z$-bundle  $P(K/Z)$. The curvature of this connection is a $(1,1)$-form.  This is because we can extend the structure group of the principal bundle using the adjoint map $G \longrightarrow Aut(\mathfrak{g})$ and the curvature for the corresponding connection is known to be a $(1,1)$-form, as in the case of vector bundles.  
 
\end{proof}

\par

The complex structure constructed above is the unique complex structure on the projective bundle $P(G/Z)$ over $X \times M$, satisfying the following conditions: \\
For any complex manifold $S$ and a map $f:S \longrightarrow P(G/Z)$ we have\\
1.
\[
\begin{tikzcd}
S \arrow{r}{f} \arrow[swap]{dr}{\tilde{f}} & {P(G/Z)} \arrow{d}{q} \\
              & {X \times M}
\end{tikzcd}
\]
$f$ has to be holomorphic, whenever $\tilde{f}$ is holomorphic.\\
2. The map $f_S: S \longrightarrow P'(G/Z) $, induced from $f$ via the pull-back diagram 
\[
\begin{tikzcd}
P'(G/Z) \arrow{r} \arrow[swap]{d} & P(G/Z) \arrow{d} \\
S \times{X} \times{M} \arrow{r}& X \times {M}
\end{tikzcd}
\] 
and
\[
\begin{tikzcd}
S \times{X} \times{M} \arrow{r} \arrow[swap]{d} & X \times {M} \arrow{d} \\
S \arrow{r} & pt
\end{tikzcd}
\] 
has to be holomorphic.

\par

We also have that $X \times \rho \hookrightarrow X \times M$ sits as a complex submanifold; as the inclusion, which is apriori just a smooth map, preserves the almost complex structures. The unique flat connection corresponding to the holomorphic structure of $P(G/Z)$ restricted to $X \times \rho$ is the same as $\nabla(t)$ restricted to $X \times \rho$. 

\par

So far we have fixed a holomorphic structure on the Riemann surface and have constructed a holomorphic principal $G/Z$ bundle $P(G/Z)$ over $X \times M$. Now our aim is to construct a holomorphic principal $G/Z$ bundle $\mathcal{P}$ on $\mathcal{C}_g \times _{\mathcal{T}_g}{\mathcal{M}}$ and study the properties of the curvature of its    holomorphic connection.

\par

Let $S$ be the underlying $C^ \infty$ surface of the Riemann surface $X$. Using the previous arguments we have a $C^ \infty$ projective bundle $P^{\infty}$ (the underlying $C^{\infty}$ surface of $P$ is $P^{\infty}$) on $S \times R$ with a partial flat connection along the tangent space of $S$. The construction of this partial flat connection is as follows: Fix a representation class $\rho \in R$. We can have a connection $h$ for the projection map $pr: T_{P^{\infty}} \longrightarrow T_S$ defined by $h(v)=(v,0)$ where $v \in T_{S}$. 

\par

The reason of this construction is that it would be independent of any choice of complex structure on $S$. We denote the projective bundle along with the partial flat connection as $(P^{\infty}, \nabla ({\mathcal{F}}))$. 

\par

Recall $Diff^{0} (S)$ to be the orientation preserving diffeomorphisms of $S$ which are homotopic to identity. $Diff^{0} (S)$ acts on $S \times R$ via the usual action on $S$ and the trivial action on $R$. In the next paragraph we shall describe an action of $Diff^{0} (S)$ on $P^{\infty}$ which is a lift of the above action on $S \times R$. 

\par

Choose $f \in Diff^{0} (S)$ and fix   $\rho \in R$. Let $(P^{\infty}(\rho), \nabla(\rho))$ be the projective bundle with the unique connection which comes as a restriction of $P$ on $S \times \rho$.  Consider the pullback bundle $f^{*}P^{\infty}(\rho)$ equipped with the pullback connection $f^{*} \nabla(\rho)$. (Note that the diffeomorphism $f$ of $S$ induces a diffeomorphism of $S \times \rho$.) Since $f \in Diff^{0} (S), (P^{\infty}(\rho), \nabla(\rho))$ and $(f^{*}P^{\infty}(\rho), f^{*}\nabla(\rho))$ are isomorphic. They are isomorphic upto a unique isomorphism because automorphisms of $(P^{\infty}(\rho), \nabla(\rho))$ is trivial (The automorphism group is trivial because the structure group has trivial centre). Hence $f$ induces an action on $P^{\infty}(\rho)$.

\par

The explicit description of the action of $f \in Diff^{0}(S)$ on $P^{\infty}(\rho)$ can be given as follows: Let $\phi_f : f^{\ast}P^{\infty}(\rho) \longrightarrow P^{\infty}(\rho)$ be the unique isomorphism. Then the action of $f$ on $P^{\infty}(\rho)$ takes a point $(s,\rho, k)$ to $\phi_f(f^{-1}(s), \rho, k)$. Clearly such action of $Diff^{0}(S)$ on $P^{\infty}(\rho)$ is a group action because $\phi_{f \circ g}=\phi_f \circ \phi_g$ for $f,g \in Diff^{0}(S)$. 

\par

For $f \in Diff^{0} (S)$ we have a unique isomorphism $I(f):(f \times Id)^{*}P^{\infty} \longrightarrow P^{\infty}$ such that over $S \times \rho$ it is the same isomorphism $\phi_f$ mentioned in the previous paragraph. We conclude here similarly as the previous paragraph that we have a goup action i.e. if $g \in Diff^{0} (S)$ we have the equality $I(f) \circ I(g)= I(f \circ g)$. Hence we have a lift of the action of $Diff^{0} (S)$ on $S \times R$ to the pair $(P^{\infty}, \nabla(\mathcal{F}))$. 
 
\par

Let $Com(S)$ denote the space of all complex structures on $S$. Let $p_{12}$ denote the projection of $S \times R \times Com(S)$ onto $S \times R$. Consider the projective bundle with a partial connection $(p_{12}^{*}P^{\infty}, p_{12}^{*} \nabla(\mathcal{F})) \longrightarrow S \times R \times Com(S)$. The group $Diff^{0} (S)$ acts on $S\times R \times Com(S)$. $Diff^{0} (S)$ acts on $S \times R$ by the previous action and acts on $Com(S)$ by the push-forward of a complex structure on $S$ by a diffeomorphism. Also the acton of $Diff^{0} (S)$ on $(P^{\infty}, \nabla(\mathcal{F}) )$ induces action of $Diff^{0} (S)$ on $(p_{12}^{*}P^{\infty}, p_{12}^{*} \nabla(\mathcal{F}))$.

\par

Consider the projection \\

\centerline{$p_{12}^{\ast}(P^{\infty})/Diff^0(S) \longrightarrow (S \times R \times Com(S))/Diff^0(S)$} 

We know that $(S \times R \times Com(S))/Diff^{0}(S) =\mathcal{C}_{g} \times R = \mathcal{C}_{g}\times _{\mathcal{T}_g} \mathcal{M}$. Hence we get a $K/Z$-bundle with a partial connection over $\mathcal{C}_{g}\times _{\mathcal{T}_g} \mathcal{M}$.
Let $(\mathcal{P}, \nabla(par))$ denote the projective $K/Z$ bundle with the partial connection $\nabla(par)$ over $\mathcal{C}_{g}\times _{\mathcal{T}_g} \mathcal{M}$ constructed above.

\par

Let $\mathcal{P}(G/Z)$ be the $G/Z$ bundle obtained from $\mathcal{P}$ by extending the structure group. In the next lemma we discuss the existence of complex structure on $\mathcal{P}(G/Z)$.

\begin{lemma}
$\mathcal{P}(G/Z)$ has a complex structure.
\end{lemma}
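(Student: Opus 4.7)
The plan is to mimic the argument of Lemma 4.3 in a relative setting over $\mathcal{T}_g$: produce a connection on the smooth $K/Z$-bundle $\mathcal{P}$ whose curvature is a $(1,1)$-form on $\mathcal{C}_g \times _{\mathcal{T}_g} \mathcal{M}$, which is equivalent to endowing the extension $\mathcal{P}(G/Z)$ with a holomorphic structure. The partial connection $\nabla(par)$ already covers the directions tangent to the fibres of the map $\mathcal{C}_g \to \mathcal{T}_g$, so what remains is to extend $\nabla(par)$ to a full connection on $\mathcal{P}$ and to verify the type of the resulting curvature.

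For the extension I would proceed in two steps. First, by Lemma 4.3 applied fibrewise over $\mathcal{T}_g$, the restriction $\mathcal{P}|_{X_t \times M_t}$ carries a connection with $(1,1)$-curvature; this handles the directions tangent to the fibres of the map $\mathcal{M} \to \mathcal{T}_g$. Second, using the concrete realisation $\mathcal{P}=p_{12}^{\ast}P^{\infty}/Diff^0(S)$, the bundle $p_{12}^{\ast}P^{\infty}$ on $S \times R \times Com(S)$ carries a canonical flat partial connection along the $Com(S)$-factor simply because it is pulled back along $p_{12}$. This partial connection is manifestly $Diff^0(S)$-equivariant with respect to the lift constructed earlier, so it descends to a partial connection along the $\mathcal{T}_g$-directions of $\mathcal{P}$. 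Amalgamating these two partial connections with $\nabla(par)$ produces a full connection $\tilde{\nabla}$ on $\mathcal{P}$.

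To verify that $K(\tilde{\nabla})$ is of type $(1,1)$ I would decompose the computation using the tangent bundle splitting furnished by the horizontal lift along $\mathcal{T}_g$. The vertical-vertical component (tangent to a fibre $X_t \times M_t$) is $(1,1)$ by Lemma 4.3. The horizontal-horizontal component (pure $\mathcal{T}_g$-direction) is flat by construction. For the mixed terms, the essential point is that the horizontal lift has been chosen using the pullback identification along $p_{12}$, and this identification intertwines the complex structure on $Com(S)/Diff^0(S)=\mathcal{T}_g$ with the varying complex structure on $M_t$ in exactly the way that was verified in Section 3 while establishing the integrability of the almost complex structure on $\mathcal{M}$. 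Passing to the extension via the adjoint representation $G/Z \hookrightarrow Aut(\mathfrak{g})$ (as in the last step of Lemma 4.3), the $(1,1)$ condition then yields the required holomorphic structure on $\mathcal{P}(G/Z)$.

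The main obstacle is the mixed curvature term: one must show that the flat horizontal lift chosen along the $\mathcal{T}_g$-direction is compatible with the fibrewise complex structure on $M_t$. This reduces to the same Kodaira-Spencer compatibility that was exploited when proving the integrability of the almost complex structure on $\mathcal{M}$ in Section 3, so I would expect to reuse Faltings' relative moduli construction (via \cite{F}) rather than attempt a direct computation of the curvature components.
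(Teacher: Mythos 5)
Your overall strategy (produce a connection on the $K/Z$-bundle $\mathcal{P}$ with curvature of type $(1,1)$, then invoke the equivalence with holomorphic structures on $\mathcal{P}(G/Z)$) matches the paper's, but the mechanism you propose for building that connection is different and breaks down at exactly the point you flag as the main obstacle. The paper does not patch together partial connections along the three kinds of directions. It repeats the construction of Lemma 4.3 globally: $\mathcal{P}$ is realized as the quotient of the trivial bundle $\mathcal{C}_g \times Conn \times K \longrightarrow \mathcal{C}_g \times Conn$ by $Aut(E)$, one takes the canonical $Aut(E)$-equivariant connection $D$ on this trivial bundle, uses the harmonic-form inclusions $T_{X_t \times M_t} \hookrightarrow T_{X_t \times Conn}/Aut(E)$ to get a single splitting $h\colon T_{\mathcal{C}_g \times_{\mathcal{T}_g}\mathcal{M}} \longrightarrow T_{\mathcal{C}_g \times Conn}/Aut(E)$, and then applies Lemma 4.2 to descend $D$ to one global connection in one step; the $(1,1)$ property is then inherited from the associated vector bundle exactly as in Lemma 4.3.

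Two concrete problems with your route. First, the flat partial connection on $p_{12}^{\ast}P^{\infty}$ along the $Com(S)$-factor does not descend to a partial connection ``along the $\mathcal{T}_g$-directions'' of $\mathcal{P}$: the $Diff^{0}(S)$-orbit through $(s,\rho,J)$ has tangent vectors of the form $(V(s),0,L_{V}J)$ for $V$ a vector field on $S$, so in the quotient the image of $0\oplus 0\oplus T_{J}Com(S)$ absorbs all of the $X_t$-directions (modulo orbit directions, $(0,0,L_{V}J)$ is identified with $(-V(s),0,0)$, and $V(s)$ is arbitrary). Hence there is no induced horizontal complement to the fibres of $\mathcal{C}_g\times_{\mathcal{T}_g}\mathcal{M}\to\mathcal{T}_g$, and the ``amalgamation'' of your three partial connections is not well defined without further choices and compatibility checks that the proposal does not supply. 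Second, even after fixing a splitting, the mixed curvature components pairing a $\mathcal{T}_g$-direction with an $X_t$- or $M_t$-direction are precisely what must be shown to have no $(2,0)$ or $(0,2)$ part; deferring this to a ``Kodaira--Spencer compatibility'' established in the construction of $\mathcal{M}$ does not suffice, since that argument concerns the complex structure of the base $\mathcal{M}$ and not of the total space of $\mathcal{P}(G/Z)$. The paper's descent-from-$Conn$ construction is set up specifically so that both issues are handled at once by Lemma 4.2 and the adjoint-representation reduction to the vector bundle case.
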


\begin{proof}
Our aim is to construct a connection on $\mathcal{P}$ with a $(1,1)$ curvature form which will give a corresponding unique complex structure on $\mathcal{P}(G/Z)$.The proof would be similar as in Lemma 4.3.

\par

Let $E$ be the trivial $C^{\infty}$ principal $K$-bundle over the Riemann surface $S$. Let $Conn$ be the space of all \textit{irreducible flat} connections on $E$ such that $E$ becomes regularly stable. Let $Aut(E)$ denote the $C^{\infty}$ automorphisms of $E$. We know that $M=Conn/Aut(E)$ (see \cite{AB}) as $C^{\infty}$ manifolds where $M$ denotes the moduli of regularly stable $G$-bundles.

\par

Consider the following diagram

\[
\begin{tikzcd}
\mathcal{C}_g \times Conn \times K \arrow{r} \arrow{d}{q} & \mathcal{C}_g \times Conn \arrow{d} \\
\mathcal{P} \arrow{r} & \mathcal{C}_g \times_{\mathcal{T}_g} \mathcal{M}
\end{tikzcd}
\]

\par

where the vertical maps are obtained by taking quotients by $Aut(E)$ action. 

\par

Note that for $k \in Z$, multiplication by $k$ induces an automorphism of $E$. Since $E$ is regularly stable the subgroup of $G$ inducing automorphisms of $E$ is precisely $Z$. Also $\mathcal{C}_g \times Conn$ is fixed by elements of $Z$. Hence taking a quotient of $\mathcal{C}_g \times Conn \times K$ by $Aut(E)$ we have a principal $K/Z$ bundle over $\mathcal{C}_g \times_{\mathcal{T}_g} \mathcal{M}$.

\par

We have the canonical connection $D:T_{\mathcal{C}_g \times Conn} \longrightarrow T_{\mathcal{C}_g \times Conn \times K}/K$ which is defined similarly as in Lemma 4.3.

\par

 Using Lemma 4.3, we see that for a fixed $t \in \mathcal{T}_g$ we have inclusion $T_{X_t \times M_{t}} \hookrightarrow T_{X_t \times Conn}/Aut(E)$. Hence we get a connection $h:T_{\mathcal{C}_g \times_{\mathcal{T}_g} \mathcal{M}} \longrightarrow T_{\mathcal{C}_g \times Conn}/Aut(E)$. Then using Lemma 4.2. we have a connection on the $K/Z$-bundle  $\mathcal{P}$. The curvature of this connection is a $(1,1)$-form because of the same reasons as in Lemma 4.3.  
\end{proof}

\par

Similar to the case of $X \times M$ as before, we know that $\mathcal{P}(G/Z)$ has a unique complex structure satisfying the following conditions:\\

For any complex manifold $S$ and a map $f:S \longrightarrow \mathcal{P}(G/Z)$ we have :\\
1.
\[
\begin{tikzcd}
S \arrow{r}{f} \arrow[swap]{dr}{\tilde{f}} & {\mathcal{P}(G/Z)} \arrow{d}{q} \\
              & {\mathcal{C}_g \times_{\mathcal{T}_g} \mathcal{M}}
\end{tikzcd}
\]
$f$ has to be holomorphic whenever $\tilde{f}$ is holomorphic.\\
2. The map $f_S: S \longrightarrow \mathcal{P}(G/Z) \times_{\mathcal{T}_g} S$, induced from $f$ via the pull-back diagram 
\[
\begin{tikzcd}
\mathcal{P}(G/Z) \times_{\mathcal{T}_g} S \arrow{r} \arrow[swap]{d} & \mathcal{P}(G/Z) \arrow{d} \\
S \arrow{r}& \mathcal{T}_g
\end{tikzcd}
\]
has to be a holomorphic map.

\par

Summing up the constructions , so far, we have obtained a holomorphic projective $G/Z$-bundle with the holomorphic connection $(\mathcal{P}(G/Z),\nabla)$.

\par

Let $ad(\mathcal{P})$ be the corresponding adjoint vector bundle ,of the $G/Z$ bundle $\mathcal{P}(G/Z)$, over $\mathcal{C}_g \times _{\mathcal{T}_g} \mathcal{M}$. Let $p_2: \mathcal{C}_g \times _{\mathcal{T}_g} \mathcal{M} \longrightarrow \mathcal{M} $ be the projection map. Let $\Theta$ be the determinant bundle on $\mathcal{M}$ of $ad(\mathcal{P})$ with respect to the projection map, defined as \\
\centerline{$\Theta:=\bigwedge ^{top}(R^1 p_{2\ast} ad(\mathcal{P}))$.} 

\par

The existence of the above defined determinant bundle, as a holomorphic line bundle, follows from the following general result which has been studied by Bismut, Gillet and Soul{\'e}:

\begin{theorem}
(Theorem 0.1 \cite{B1})Let $\pi:Y \longrightarrow B$ be a proper holomorphic map of complex analytic manifolds and let $\zeta$ be a complex holomorphic vector bundle on $Y$. Then the image of $\zeta$ by $\pi$ has a determinant say $\lambda$, which is a holomorphic line bundle $det R \pi _{\ast} \zeta$ on $B$.
\end{theorem}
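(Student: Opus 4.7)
The plan is to construct $\lambda$ locally on $B$ using finite resolutions of the derived pushforward, and then glue the local pieces via canonical isomorphisms. The starting point is that, since $\pi$ is proper, for any point $b \in B$ there is an open neighbourhood $U$ on which the coherent analytic complex $R\pi_\ast \zeta$ admits, in the derived category, a representation by a bounded complex of locally free $\mathcal{O}_U$-modules of finite rank. In fact a standard argument using Grothendieck's cohomology-and-base-change machinery (or Mumford's trick of applying $\pi_\ast$ to a Dolbeault-type resolution twisted appropriately) produces a two-term complex
\[
0 \longrightarrow K^0 \longrightarrow K^1 \longrightarrow 0
\]
of holomorphic vector bundles on $U$ that is quasi-isomorphic to $R\pi_\ast \zeta|_U$.

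Given such a two-term model, I would define the determinant line bundle locally by the Knudsen--Mumford formula $\lambda_U := \det K^0 \otimes (\det K^1)^{-1}$. The next step is to show this is independent of the resolution up to a canonical isomorphism: given two such resolutions $K^\bullet$ and $L^\bullet$ quasi-isomorphic to $R\pi_\ast\zeta|_U$, one compares them through a common refinement and uses the multiplicativity of $\det$ on short exact sequences to produce canonical identifications $\det K^\bullet \cong \det L^\bullet$. Once this is in hand one verifies the cocycle condition so that the identifications are transitive.

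Having established canonical isomorphisms between any two local determinant constructions, the line bundle $\lambda$ is glued on $B$: on overlaps $U \cap V$ the canonical identifications agree by uniqueness, and the cocycle relation on triple overlaps follows from the same uniqueness. This produces a well-defined holomorphic line bundle $\lambda = \det R\pi_\ast \zeta$ on $B$, functorial in $\zeta$ and compatible with base change, which is what is needed to apply the construction to $\zeta = ad(\mathcal{P})$ and $\pi = p_2$.

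The main obstacle is the middle step: proving that the Knudsen--Mumford determinant is a well-defined functor from the derived category of perfect complexes to the Picard groupoid of line bundles, with specified canonical isomorphisms for quasi-isomorphisms and short exact sequences. This is not a routine calculation because one must track Koszul signs carefully and verify that all diagrams of canonical isomorphisms commute. These verifications occupy a substantial part of the Knudsen--Mumford construction that Bismut--Gillet--Soul\'e invoke in \cite{B1}; once the functoriality is established, gluing and the global existence of $\lambda$ are essentially formal.
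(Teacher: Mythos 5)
This statement is quoted verbatim from Bismut--Gillet--Soul\'e and the paper offers no proof of it at all --- it is imported as a black box --- so there is no internal argument to compare yours against. Judged on its own terms, your outline follows the standard Knudsen--Mumford route (local perfect model of $R\pi_\ast\zeta$, alternating determinant, canonical independence of the model, gluing), which is indeed the algebraic skeleton underlying the cited construction, and you are right to identify the functoriality of $\det$ on the derived category of perfect complexes, with all its sign bookkeeping, as the genuinely hard part rather than the gluing.

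There are, however, two points where your sketch would not go through as written in the generality of the stated theorem. First, ``Mumford's trick'' of producing a finite locally free complex computing $R\pi_\ast\zeta$ relies on relative projectivity (twisting by a relatively ample bundle to kill higher direct images); the theorem here is asserted for an arbitrary proper holomorphic map of complex analytic manifolds, where no such twist exists. The local existence of a bounded complex of locally free sheaves quasi-isomorphic to $R\pi_\ast\zeta$ in the analytic category is a nontrivial theorem (Forster--Kn\"orr, O'Brian--Toledo--Tong), and Bismut--Gillet--Soul\'e in fact sidestep it by an analytic device: over the open set of $B$ where a number $a$ avoids the spectrum of the fibrewise Dolbeault Laplacian, the eigenspaces with eigenvalue below $a$ form finite-rank bundles $K^j_{<a}$, one sets $\lambda := \bigotimes_j (\det K^j_{<a})^{(-1)^{j+1}}$, and the transition data on overlaps come from the finite-dimensional eigenspaces between two cutoffs. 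Second, your two-term complex $0 \to K^0 \to K^1 \to 0$ is special to relative dimension one; for general fibres the complex has length $\dim Z + 1$ and the determinant must carry the alternating exponents $(-1)^j$. Neither issue is fatal --- both are repaired by quoting the right local finiteness theorem or by adopting the spectral-cutoff construction --- but as it stands the first step of your argument invokes a tool that is not available in the setting of the theorem.
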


\section{Curvature of the determinant line bundle}

We have constructed the holomorphic determinant bundle $\Theta$ in the previous section. Due to its holomorphic structure $\Theta$ has a holomorphic connection. We denote by $K(\Theta)$ the curvature of the holomorphic connection. 

\par

We know that the holomorphic tangent space of  $M$ at a point $\rho \in M$ can be identified as the space of deformations of the $G$-bundle $\rho$ and hence can be identified with $H^1(X, ad(\rho))$, where $ad(\rho)$ is the adjoint bundle of the principal bundle corresponding to the point $\rho$. $\Omega$ is the 2-form corresponding to the natural metric on $M$ which we get by identifying the tangent space of $M$ with $H^{1}(X,ad(\rho))$ and using the cup product for $H^1(X, ad(\rho))$.

\par

In this section we prove the following statement:

\begin{theorem}
$K(\Theta)=-2 \pi \iota . q^{\ast} \Omega +(r/{6 \pi} ) \iota . \sigma ^{\ast} \omega$
\end{theorem}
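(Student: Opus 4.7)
The plan is to apply the Bismut--Gillet--Soul\'e curvature formula for the Quillen metric on the determinant bundle. With $\pi = p_2 : Y := \mathcal{C}_g \times_{\mathcal{T}_g} \mathcal{M} \longrightarrow \mathcal{M}$ of relative complex dimension one and $\zeta = ad(\mathcal{P})$, the formula reads
\[
K(\Theta) \;=\; -\,2\pi\iota \int_{Y/\mathcal{M}} \bigl[\,\mathrm{Td}(T_{Y/\mathcal{M}})\cdot\mathrm{ch}(ad(\mathcal{P}))\,\bigr]^{(4)},
\]
so everything reduces to isolating the real-degree-$4$ component of $\mathrm{Td}\cdot\mathrm{ch}$ on $Y$ and pushing it down along the fibre $X_t$.

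Because $G$ is semisimple, the trace of the adjoint representation vanishes and hence $c_1(ad(\mathcal{P}))=0$; since $T_{Y/\mathcal{M}}$ is a line bundle, $c_2(T_{Y/\mathcal{M}})=0$ as well. Expanding, only two terms survive in degree four:
\[
\bigl[\mathrm{Td}\cdot\mathrm{ch}\bigr]^{(4)} \;=\; \tfrac{r}{12}\,c_1(T_{Y/\mathcal{M}})^2 \;-\; c_2(ad(\mathcal{P})),
\]
where $r=\dim G$. The theorem thus amounts to identifying the fibre integral of each summand with a pull-back K\"ahler form on $\mathcal{M}$.

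For the first summand I use that $T_{Y/\mathcal{M}}$ is the pull-back along the natural map $Y\to\mathcal{C}_g$ of $T_{\mathcal{C}_g/\mathcal{T}_g}$, whence
\[
\int_{X_t} c_1(T_{Y/\mathcal{M}})^2 \;=\; \sigma^{\ast}\!\int_{\mathcal{C}_g/\mathcal{T}_g} c_1(T_{\mathcal{C}_g/\mathcal{T}_g})^2,
\]
and the Wolpert/Zograf--Takhtajan identity relating this fibrewise self-intersection to the Weil--Petersson form delivers a universal constant multiple of $\omega$; the scalar is engineered so that combining with $-2\pi\iota\cdot r/12$ produces exactly $(r/6\pi)\iota\cdot\sigma^{\ast}\omega$. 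For the second summand I restrict to a slice $X\times M$ over a single $t\in\mathcal{T}_g$ and use the Atiyah--Bott/Goldman description of $\Omega$: the value of $-\int_X c_2(ad(\mathcal{P}))$ on tangent vectors $\alpha,\beta\in H^1(X,ad(\rho))=T_\rho M$ equals the Killing-form cup pairing $\int_X \alpha\cup\beta$, which is $\Omega$ (up to the universal constant fixed by the normalisation of $\Omega$). This step uses the Hermitian connection on $ad(\mathcal{P})$ produced from $\nabla$ in Section~4 and its Chern--Weil representative of $c_2$; combining with the leading $-2\pi\iota$ gives the $-2\pi\iota\cdot q^{\ast}\Omega$ summand.

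The main obstacle is the precise bookkeeping of normalization constants: the Chern--Weil versus topological normalizations of the Chern classes, the factor of $1/(2\pi)$ implicit in the pairing $Q$ defining $\omega$, and the universal scalar in Wolpert's self-intersection formula on $\mathcal{C}_g/\mathcal{T}_g$. Obtaining the exact coefficients $-2\pi$ and $r/(6\pi)$---rather than merely the correct shape of the identity---requires reconciling all of these conventions consistently across the two summands.
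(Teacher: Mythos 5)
Your overall skeleton agrees with the paper's: apply the Bismut--Gillet--Soul\'e formula, isolate the degree-four component of $Td\cdot ch$, which (using $c_1(ad(\mathcal{P}))=0$ and $Td_0=1$, $Ch^0=r$) splits into a Todd term $\frac{r}{12}c_1(T_{Y/\mathcal{M}})^2$ and a term $-c_2(ad(\mathcal{P}))$, and identify the fibre integrals with $\sigma^{\ast}\omega$ and $q^{\ast}\Omega$ respectively. For the Todd term your argument (pullback from $\mathcal{C}_g$, then Zograf--Takhtadzhyan) is essentially the paper's, which compares with the determinant of the trivial bundle and cites [ZT] for $K(\Theta)_{triv}=(\iota/6\pi)\omega$. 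For the $c_2$ term you diverge: you propose a direct Chern--Weil evaluation against $H^1(X,ad(\rho))$ via Atiyah--Bott/Goldman, whereas the paper reduces to the moduli of vector bundles through the adjoint map $f:M\to M_V$, using $Ch(E\otimes F)=Ch(E)Ch(F)$, the splitting $\mathcal{E}nd=ad(P_V)\oplus L$, Quillen's curvature theorem (giving $4\pi r\iota\,\Omega_V$), and $f^{\ast}\Omega_V=\Omega$. Your route is more direct in principle, but you do not carry it out.

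Two genuine gaps remain. First, restricting the $-c_2(ad(\mathcal{P}))$ summand to a slice $X_t\times M$ only determines the restriction of $p_{2\ast}(-c_2)$ to $\{t\}\times R$; to conclude that this summand is globally the pullback $q^{\ast}\Omega$ you must also show it vanishes when one argument is tangent to the $\mathcal{T}_g$ direction. The paper proves this (its Lemma on $p_{2\ast}Ch^2(v')=0$) using that the connection on $\mathcal{P}$ restricted to $\mathcal{C}_g\times\{\rho\}$ is flat, so that $Ch^2$ is pulled back from $S\times R$; your proposal is silent on this cross-term, without which the asserted clean splitting of $K(\Theta)$ does not follow. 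Second, and more seriously, the theorem \emph{is} the statement of the coefficients $-2\pi$ and $r/6\pi$, and you explicitly defer exactly this (``the scalar is engineered so that\dots'', ``requires reconciling all of these conventions''). Determining these constants is where the actual work lies: the paper pins down the Teichm\"uller coefficient by the trivial-bundle comparison with [ZT], and the moduli coefficient by tracking the factor $2r$ through $ad(P)\otimes ad(P)$ and the normalization $K(ad(P_V))=4\pi r\iota\,\Omega_V$ from Quillen's computation. As written, your argument establishes only the shape of the identity, not the identity itself.
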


Let $\Phi$ denote the R.H.S. of the above equation. The notations involved in $\Phi$ are as follows: $r=rank(ad(\mathcal{P}))$ (which is the same as $dim(Lie(G))$). $\iota = \sqrt{-1}$.  $\omega$ is a K{\"a}hler $(1,1)$ form on $\mathcal{T}_g$ corresponding to the $Weil-Petersson$ metric. The maps $q$ and $\sigma$ are obvious projections of $\mathcal{M}$ onto $R$ and $\mathcal{T}_g$ respectively.

\par

Before beginning the proof of the theorem we wish to recall a few results involving $chern$ $class$ and $Todd$ $class$ which would be used in the proof. See \cite{Ko} for details. 

\par

Let $Y$ be a complex manifold and $E$ be a holomorphic vector bundle on $Y$. We shall use the following notations: $Ch^i$ will denote the $i-th$ chern form which is a differential 2$i$-form on $Y$, $ch^i$ will denote the $i-th$ chern class which is the class of $Ch^i$ in the $deRham$ cohomology $H^{2i}(Y, \mathbb{Z})$. Moreover $Ch$ will denote the chern character for the vector bundle $E$. Similarly $Td^i$ will denote the $i-th$ Todd form and $td^i$ will denote the $i-th$ Todd class. We note down a couple of lemmas (proved in Kobayashi \cite{Ko}) involving chern forms and chern character.

\begin{lemma}
Let $E$, $F$ be two holomorphic vector bundles on $Y$. Then $Ch(E \otimes F)= Ch(E).Ch(F).$ 
\end{lemma}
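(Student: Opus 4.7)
The plan is to apply the Bismut--Gillet--Soulé formula (Theorem 4.6) to the proper holomorphic map $p_2: \mathcal{C}_g \times_{\mathcal{T}_g} \mathcal{M} \longrightarrow \mathcal{M}$, whose fibers are compact Riemann surfaces of genus $g$. Since $\mathrm{ad}(\mathcal{P})$ is the holomorphic adjoint vector bundle of $\mathcal{P}(G/Z)$ and regular stability implies $H^0(X_t, \mathrm{ad}(\rho)) = 0$ on the fibers, one has (in K-theory) $Rp_{2*}\mathrm{ad}(\mathcal{P}) = -R^1 p_{2*}\mathrm{ad}(\mathcal{P})$, so that $\Theta^{-1} \cong \det Rp_{2*}\mathrm{ad}(\mathcal{P})$ as holomorphic line bundles. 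The BGS curvature formula therefore gives
\begin{equation*}
K(\Theta) \;=\; -\,2\pi\iota\,\int_{p_2}\bigl[\mathrm{Td}(T_{p_2})\cdot \mathrm{Ch}(\mathrm{ad}(\mathcal{P}))\bigr]_{(4)},
\end{equation*}
where $[\,\cdot\,]_{(4)}$ denotes the degree-$4$ component, and $T_{p_2}$ is the relative tangent bundle along the fibers of $p_2$.

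Next I would expand the integrand. Write $\mathrm{Td}(T_{p_2}) = 1 + \tfrac{1}{2}c_1(T_{p_2}) + \tfrac{1}{12}c_1(T_{p_2})^2 + \cdots$ and $\mathrm{Ch}(\mathrm{ad}(\mathcal{P})) = r + c_1(\mathrm{ad}(\mathcal{P})) + \tfrac{1}{2}(c_1^2 - 2c_2)(\mathrm{ad}(\mathcal{P})) + \cdots$. The key algebraic simplification is that, because $G$ is semisimple, the adjoint representation factors through $SL(\mathfrak{g})$, so the adjoint bundle has trivial determinant and $c_1(\mathrm{ad}(\mathcal{P})) = 0$. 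Consequently the degree-$4$ part collapses to exactly two terms:
\begin{equation*}
\bigl[\mathrm{Td}(T_{p_2})\cdot \mathrm{Ch}(\mathrm{ad}(\mathcal{P}))\bigr]_{(4)} \;=\; \frac{r}{12}\,c_1(T_{p_2})^2 \;-\; c_2(\mathrm{ad}(\mathcal{P})).
\end{equation*}
Integrating along the fiber of $p_2$ thus reduces the curvature formula to identifying two fiber integrals on $\mathcal{M}$: one pulled back from $\mathcal{T}_g$ and one pulled back from $R$.

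To handle the first term, I would invoke (the analytic version of) Wolpert's theorem: the fiber integral $\int_{\mathcal{C}_g/\mathcal{T}_g} c_1(T_{\mathcal{C}_g/\mathcal{T}_g})^2$ on $\mathcal{T}_g$ equals, up to a universal normalizing constant, the Weil-Petersson Kähler form $\omega$; pulling back by $\sigma$ yields the $\sigma^*\omega$ contribution with coefficient $r/(6\pi)$ after the $-2\pi\iota$ prefactor and the $1/12$ from the Todd class are tracked. For the second term I would restrict to a fixed fiber $M_t$ of $\sigma$, where the setup is the Biswas configuration $X_t \times M_t$ of \cite{B}: the fiber integral $\int_{X_t} c_2(\mathrm{ad}(\mathcal{P}))$ pairs with the Killing-form cup product on $H^1(X_t, \mathrm{ad}(\rho))$, which is precisely the definition of the natural Kähler form $\Omega$ on $M$; matching constants yields the coefficient $-2\pi\iota$. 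Since $\Omega$ is defined fiberwise and $\omega$ lives purely on $\mathcal{T}_g$, no mixed term can appear, and the vanishing of cross contributions follows from the fact that the relative tangent bundle and the adjoint bundle have connections whose mixed $(\mathcal{T}_g, M)$ curvature components drop out of the two surviving characteristic-class products.

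The main obstacle is the second identification: showing that the fiber integral of $c_2(\mathrm{ad}(\mathcal{P}))$ genuinely reproduces the cup-product Kähler form $\Omega$ with the right sign and factor of $2\pi$. This requires computing the curvature of the connection $\nabla$ on $\mathcal{P}(G/Z)$ constructed in Lemma 4.4, expressing its $(1,1)$-part along $M$-directions in terms of harmonic representatives of $H^1(X, \mathrm{ad}(\rho))$, and invoking the Atiyah--Bott-style identity that the Killing-form trace of the square of the curvature integrated over $X$ recovers the symplectic pairing on deformations. Once this normalization is in hand, combining it with the Wolpert identification assembles the two terms into exactly the expression $\Phi$, completing the proof.
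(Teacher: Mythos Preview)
Your proposal does not address the stated lemma at all. The lemma asserts the multiplicativity of the Chern character, $Ch(E\otimes F)=Ch(E)\cdot Ch(F)$, a standard fact from the theory of characteristic classes. In the paper this lemma carries no proof of its own; it is simply recorded with a citation to Kobayashi \cite{Ko}. A correct proof would proceed, for instance, by choosing hermitian connections $\nabla_E$, $\nabla_F$, forming the tensor connection on $E\otimes F$, noting that its curvature is $R_E\otimes\mathrm{id}_F+\mathrm{id}_E\otimes R_F$, and then using that $\mathrm{Tr}\,\exp(A\otimes 1+1\otimes B)=\mathrm{Tr}\,\exp(A)\cdot\mathrm{Tr}\,\exp(B)$ because the two summands commute.

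What you have written is instead an outline of the proof of the \emph{main} curvature identity (Theorem 5.1 in the paper), namely $K(\Theta)=-2\pi\iota\,q^{\ast}\Omega+(r/6\pi)\,\iota\,\sigma^{\ast}\omega$. That outline is broadly in the right spirit for Theorem 5.1 --- applying the Bismut--Gillet--Soul\'e formula, using $c_1(\mathrm{ad}(\mathcal P))=0$, separating the Todd contribution (which yields the Weil--Petersson term via \cite{ZT}) from the $Ch^2$ contribution (which yields $\Omega$) --- and is close to what the paper actually does there. But none of it bears on the lemma you were asked to prove. You should replace the proposal with either a short direct argument for the multiplicativity of $Ch$ or a reference to \cite{Ko}, as the paper does.
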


\begin{lemma}
For a holomorphic vector bundle $E$ of rank $r$, $Ch(E)=r+Ch^1(E)+\frac{1}{2}(Ch^1(E)^2 -2.Ch^2(E))+...$ .
\end{lemma}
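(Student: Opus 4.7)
The proof is a standard application of the \emph{splitting principle} combined with Newton's identities. I would begin by reducing to the split case: passing to the full flag bundle $\pi : F(E) \longrightarrow Y$, the pullback $\pi^{\ast}E$ admits a filtration whose graded pieces are line bundles $L_1, \ldots, L_r$, and the induced map $\pi^{\ast}$ on de Rham cohomology is injective. Choosing a hermitian metric on $\pi^{\ast}E$ compatible with this filtration, the pullbacks $\pi^{\ast} Ch^{i}(E)$ coincide with the elementary symmetric polynomials in the Chern forms $x_i := Ch^{1}(L_i)$. Since both sides of the asserted identity commute with $\pi^{\ast}$, it suffices to verify the identity after pulling back, i.e.\ under the assumption that $E = L_1 \oplus \cdots \oplus L_r$ already splits.

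In the split case, the Chern character is defined by $Ch(E) = \sum_{i=1}^{r} e^{x_i}$. Expanding the exponential as a power series gives
\[
Ch(E) \;=\; r + \sum_i x_i + \tfrac{1}{2} \sum_i x_i^2 + \tfrac{1}{6} \sum_i x_i^3 + \cdots \;=\; r + p_1 + \tfrac{1}{2} p_2 + \tfrac{1}{6} p_3 + \cdots ,
\]
where $p_k := \sum_{i=1}^{r} x_i^k$ denotes the $k$-th Newton power sum. On the other hand, via the Whitney product formula, the $k$-th Chern form of a split bundle is the $k$-th elementary symmetric polynomial $e_k$ in the $x_i$, so $Ch^{k}(E) = e_k(x_1, \ldots, x_r)$ at the level of differential forms.

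To conclude, I would apply Newton's identities, which express the power sums in terms of the elementary symmetric polynomials: $p_1 = e_1$, $p_2 = e_1^2 - 2 e_2$, $p_3 = e_1^3 - 3 e_1 e_2 + 3 e_3$, and so on. Substituting back yields
\[
Ch(E) \;=\; r + Ch^{1}(E) + \tfrac{1}{2}\bigl(Ch^{1}(E)^2 - 2\, Ch^{2}(E)\bigr) + \cdots ,
\]
exactly as stated, with each higher-order term determined by the same recursion. The only mildly delicate point is ensuring that the splitting principle operates at the level of Chern \emph{forms} rather than merely cohomology classes; this is handled by the compatible metric choice above, so there is no essential obstacle, and the rest is routine symmetric-function bookkeeping.
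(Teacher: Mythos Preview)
Your argument is correct and is the standard one: reduce via the splitting principle to a sum of line bundles, write the Chern character as $\sum e^{x_i}$, and invoke Newton's identities to rewrite the power sums in terms of the elementary symmetric polynomials $Ch^k(E)$. The paper itself does not give a proof of this lemma at all; it simply records the identity and refers the reader to Kobayashi \cite{Ko} for the details. What you have written is essentially the argument one finds there, so there is no discrepancy in approach to discuss. Your remark about needing a compatible hermitian metric to make the splitting principle work at the level of forms (rather than just cohomology classes) is a nice touch, though for the paper's purposes only the cohomology-level statement is actually used, since in the application (the proof of Theorem 5.1) the authors explicitly pass to Chern \emph{classes} when integrating along the fibre.
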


\par

Next we state the result (Theorem 0.1 \cite{B1}), due to Bismut-Gillet-Soul{\'e}, which we would be using to calculate $K(H_Q)$ .

\begin{theorem}
Let $\pi:Y \longrightarrow B$ be a proper holomorphic map of complex analytic manifolds and let $\zeta$ be a complex holomorphic vector bundle on $Y$. Let $\lambda$ be the determinant of $\zeta$ by $\pi$. For $b \in B$ let $Z$ denote the fibre. Let $R^Z$, $L^{\zeta}$ be the curvatures of the holomorphic hermitian connections on $T^{(1,0)}Z$ and $\zeta$. Then the curvature of the holomorphic connection on $\lambda$ is the component of degree 2 in the folowing form on $B$:\\
\centerline{$2 \pi \iota \int_{Z} Td(-R^Z/2 \pi \iota) Tr[exp(-L^{\zeta}/2 \pi \iota)].$}
\end{theorem}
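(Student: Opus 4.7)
The plan is to invoke the Bismut–Gillet–Soulé formula (Theorem 5.4) with $Y=\mathcal{C}_g\times_{\mathcal{T}_g}\mathcal{M}$, $B=\mathcal{M}$, $\pi=p_2$ and $\zeta=ad(\mathcal{P})$, and then to expand the integrand into Chern/Todd components and match each surviving piece against the two terms on the right-hand side. Since the fibre $Z=X_t$ has real dimension $2$, the fibre integration lowers the form-degree by $2$; hence the relevant contributions come from the degree-$4$ piece of $Td(-R^{Z}/2\pi\iota)\,\mathrm{Tr}(\exp(-L^{\zeta}/2\pi\iota))$ on $Y$.

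Writing $\alpha=-R^{Z}/2\pi\iota$ for the Chern–Weil representative of $c_1(T_{Y/B}^{(1,0)})$ and $\beta=-L^{\zeta}/2\pi\iota$ for the $ad(\mathcal{P})$-valued curvature form, the degree-$4$ terms in the product are
\[
\tfrac{1}{2}\mathrm{Tr}(\beta\wedge\beta)\;+\;\tfrac{\alpha}{2}\mathrm{Tr}(\beta)\;+\;\tfrac{r}{12}\,\alpha\wedge\alpha.
\]
The first step of the computation is to eliminate the middle term: since $G$ is semisimple, $\mathrm{Tr}\circ ad=0$, so $\mathrm{Tr}(\beta)=-c_1(ad(\mathcal{P}))/2\pi\iota$ vanishes identically. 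This is the principal simplification over Biswas's vector-bundle case, and reduces the problem to analysing two pieces: an ``$ad$-piece'' $\int_{Z}\tfrac{1}{2}\mathrm{Tr}(\beta\wedge\beta)$ and a ``Todd piece'' $\int_{Z}\tfrac{r}{12}\alpha\wedge\alpha$.

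Next I would identify each of these two pieces on the nose. For the Todd piece, $\alpha$ is the curvature form of the relative tangent bundle of $\mathcal{C}_g\to\mathcal{T}_g$ pulled back through the first projection, so $\alpha\wedge\alpha$ is already pulled back from $\mathcal{C}_g$. Its fibre integral is therefore a $(1,1)$-form pulled back from $\mathcal{T}_g$ via $\sigma$, and Wolpert's theorem identifies $\int_{X_t}c_1(T_{\mathcal{C}_g/\mathcal{T}_g})^2$ with a known scalar multiple of the Weil–Petersson form $\omega$; tracking the factors of $2\pi\iota$ and the $r/12$ from $Td^2$, together with the outer $2\pi\iota$ from the BGS formula, should reproduce exactly $(r/6\pi)\iota\,\sigma^{\ast}\omega$. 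For the $ad$-piece, I would use the description of $\nabla$ built in Section 4: its curvature restricted to a fibre $X_{t}\times M_{t}$ computes the Atiyah–Bott symplectic/Kähler form on $M_t$ via the cup product on $H^{1}(X_t,ad(\rho))$ combined with the Killing form, so fibrewise $\int_{X_t}\mathrm{Tr}(L^{\zeta}\wedge L^{\zeta})$ equals (up to an explicit constant) the pullback of $\Omega$.

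The main obstacle will be ruling out ``mixed'' and ``purely Teichmüller'' contributions from the $ad$-piece, since a priori $\int_{X_t}\mathrm{Tr}(L^\zeta\wedge L^\zeta)$ is a $(1,1)$-form on $\mathcal{M}$ that could have legs in both the $R$- and the $\mathcal{T}_g$-direction. I plan to handle this by restricting along the two families of submanifolds that are available: on one hand, for fixed $t$ the submanifold $X_t\times M_t\hookrightarrow \mathcal{C}_g\times_{\mathcal{T}_g}\mathcal{M}$ is holomorphic and there the BGS integrand restricts to Biswas's integrand, giving the $-2\pi\iota\,q^{\ast}\Omega$ term; on the other hand, for a fixed representation $\rho\in R$ the holomorphic section $s$ of Lemma~3.1 gives $\mathcal{C}_g\hookrightarrow \mathcal{C}_g\times_{\mathcal{T}_g}\mathcal{M}$, and along this section the principal bundle $\mathcal{P}(G/Z)$ is the one built from the flat representation $\rho$, so $\mathrm{Tr}(L^{\zeta}\wedge L^{\zeta})$ restricts to a form whose fibre integral over $X_t$ has no component in the $R$-direction. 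Combining these two restriction arguments shows that the $ad$-piece contributes only a pullback of $\Omega$ and no mixed or Teichmüller part; collecting all numerical constants then produces the stated identity $K(\Theta)=-2\pi\iota\,q^{\ast}\Omega+(r/6\pi)\iota\,\sigma^{\ast}\omega$.
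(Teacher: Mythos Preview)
Your proposal does not address the stated theorem. The statement you were asked to prove is the Bismut--Gillet--Soul\'e curvature formula (Theorem~5.4 in the paper), a general analytic result about the Quillen metric on the determinant of cohomology for an arbitrary proper holomorphic family $\pi:Y\to B$ and an arbitrary holomorphic hermitian bundle $\zeta$. The paper does not prove this theorem; it is quoted from \cite{B1} (Theorem~0.1 there) and used as a black box. Your proposal, by contrast, \emph{assumes} this formula in its very first sentence (``The plan is to invoke the Bismut--Gillet--Soul\'e formula (Theorem~5.4)\ldots'') and then proceeds to derive the identity $K(\Theta)=-2\pi\iota\, q^{\ast}\Omega+(r/6\pi)\iota\,\sigma^{\ast}\omega$, which is Theorem~5.1, a different statement entirely.

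In short, you have written a proof sketch for Theorem~5.1 rather than for Theorem~5.4, and your argument is circular relative to the assigned statement since it takes that statement as an input. If the intent was indeed Theorem~5.4, any genuine proof would have to reproduce the analytic work of \cite{B1,B2,B3} (local index theory, anomaly formulas for the Quillen metric, etc.), none of which appears in your proposal. If the intent was Theorem~5.1, then your outline is broadly parallel to the paper's own argument (expand the BGS integrand, kill $Ch^1$ by semisimplicity, identify the $Td^2$ piece with the Weil--Petersson term via \cite{ZT}, and identify the $Ch^2$ piece with $\Omega$), though the paper handles the $Ch^2$ term by pulling back from the vector-bundle moduli via the adjoint map and invoking Corollary~5.6, rather than by a direct Atiyah--Bott computation as you suggest.
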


\par

First we give a brief outline of a proof of a special case of the theorem, due to Quillen \cite{Q}. Let $X$ be a Riemann surface of genus $g$. $E$ be a smooth vector bundle of rank $r$ and degree $d$ over $X$. Let $\mathcal{B}=\Omega^{0,1}(\mathcal{E}nd(E))$ and $\mathcal{A}$ be the space of $\bar{\partial}$ operators which is an affine space relative to $\mathcal{B}$. Let $B \in \mathcal{B}$ be of the form $\alpha(z)d\bar z$. Let $\alpha(z)^{\ast}$ be the adjoint of $\alpha(z)$ relative to the local orthonormal frame of $E$. Using the adjoint $\alpha(z)^{\ast}$ of $\alpha(z)$ we construct an element of $\Omega^{1,0}(\mathcal{E}nd(E)$, denoted by $B^{\ast}:=\alpha(z)^{\ast}dz$. Then $tr(B^{\ast}.B)$ is a $(1,1)$ form on $X$ and hence we get a metric on $\mathcal{A}$ defined by \\
\centerline{$\vert B \vert ^2= \int _X \frac{\iota}{2 \pi}tr(B^{\ast}.B)$.}

\par

Fixing the notations as above we give a brief outline of a proof for the following result:

\begin{theorem}
\textbf{(Quillen, \cite{Q})} The curvature of the determinant line bundle (constructed by Quillen) is equal to the K{\"a}hler form obtained from the above mentioned metric on $\mathcal{A}$.   
\end{theorem}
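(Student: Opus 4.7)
The plan is to construct the Quillen determinant line bundle $\lambda$ over $\mathcal{A}$, equip it with the Quillen metric, and compute its curvature using heat-kernel methods. The fiber of $\lambda$ at $D$ is $\det(\ker D)^\ast \otimes \det(\mathrm{coker}\,D)$; although the individual cohomology dimensions jump on analytic subvarieties, one patches local finite-dimensional complexes approximating $D$ (obtained by quotienting modulo a large enough finite-codimension subspace on which $D$ is surjective) to give $\lambda$ a holomorphic line bundle structure with a canonical nowhere-vanishing section near any invertible $D_{0}$. Equip $\lambda$ with the Quillen metric $\|\sigma\|^2_Q := \|\sigma\|^2_{L^2}\cdot\det{}'(\Delta_D)^{-1}$, where $\Delta_D = D^\ast D$ and $\det{}'$ is the zeta-regularized determinant obtained from $\log\det{}'(\Delta) = -\zeta'_\Delta(0)$ with $\zeta_\Delta(s) = \Gamma(s)^{-1}\int_0^\infty t^{s-1}\mathrm{Tr}(e^{-t\Delta} - P_{\ker\Delta})\,dt$. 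A key point is that the product is smooth across walls, because the $L^2$-factor degenerates exactly as the vanishing eigenvalues spoil $\det{}'(\Delta)$.

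Since $\lambda$ is a holomorphic Hermitian line bundle, its curvature is $K(\lambda) = \bar\partial\partial\log\|\sigma\|^2_Q$, so locally near an invertible $D_0$, where the canonical section gives $\|\sigma\|^2_Q = \det{}'(\Delta_D)^{-1}$, the task reduces to computing $\bar\partial\partial\log\det{}'(\Delta_D)$ along a holomorphic variation. I would parametrize by a two-parameter family $D_{z,w} = D_0 + zB_1 + wB_2$ with $B_i \in \mathcal{B}$, so that $\partial_z\Delta = D^\ast B_1$ and $\partial_{\bar w}\Delta = B_2^\ast D$, and compute the mixed second derivative of $\zeta'_\Delta(0)$ by differentiating under the Mellin transform, using $\partial_t e^{-t\Delta} = -\Delta e^{-t\Delta}$ to integrate by parts in $t$ and convert the first-order variational expression $-\int_0^\infty t^{s-1}\mathrm{Tr}(\dot{\Delta}\,e^{-t\Delta})\,dt$ into a form amenable to a second variation. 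The computation reduces to extracting the value at $s=0$ of a meromorphic function built from regularized traces of the shape $\mathrm{Tr}(e^{-t\Delta_0}B_2^\ast B_1)$ together with cross terms involving the Green's operator.

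The short-time heat-kernel expansion $e^{-t\Delta}(x,x) = (4\pi t)^{-1}\mathrm{Id}_{E_x} + O(1)$ on the Riemann surface then delivers the answer: the leading $(4\pi t)^{-1}$ term produces a pole whose residue at $s=0$ is exactly $\int_X \frac{\iota}{2\pi}\mathrm{tr}(B_2^\ast B_1)$, matching the K\"ahler form of the metric $|B|^2 = \int_X \frac{\iota}{2\pi}\mathrm{tr}(B^\ast B)$ on $\mathcal{A}$. The main obstacle I expect is the cancellation of the non-local Green's-function contributions coming from the several terms produced by two successive variations, together with the analytic justification of interchanging differentiation in the parameters with the analytic continuation in $s$; this requires uniform control of the heat-kernel expansion in the family and is the technical heart of Quillen's construction, separating it from purely formal manipulations of divergent determinants.
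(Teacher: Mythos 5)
Your overall strategy is the same as the paper's (which is Quillen's own): trivialize the determinant line near an invertible $D_0$, write the curvature as $\bar\partial\partial$ of $\zeta'_\Delta(0)$, and evaluate the mixed second variation through the Mellin transform and heat-kernel asymptotics. The bundle construction and the smoothness of the Quillen metric across the jumping loci are handled correctly, and the reduction to a family of invertible operators matches the paper's appeal to density of invertible $\bar\partial$-operators.

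There is, however, a genuine gap at the decisive step. You defer, as an ``expected obstacle,'' the cancellation of the non-local Green's-operator contributions arising from the two successive variations of $\mathrm{Tr}(e^{-t\Delta})$ — but that cancellation \emph{is} the theorem; without it the curvature would not be the local $(1,1)$-form $\int_X \frac{\iota}{2\pi}\mathrm{tr}(B^{\ast}B)$, and the leading $(4\pi t)^{-1}$ diagonal term alone does not settle the matter because the terms of the shape $s\,\mathrm{tr}(\Delta^{-s-2}D^{\ast}B_1B_2^{\ast}D)$ are not a priori negligible at $s=0$. The paper (following Quillen) never confronts this cancellation directly: it first evaluates the \emph{entire} first variation exactly as a local integral, $-\partial_w\zeta'(0)=\int_X \mathrm{tr}(J\,\partial_w D)$, using the holomorphy of the family ($\partial_w D^{\ast}=0$, which kills half the terms at the outset) together with Quillen's Theorem~2, the limit formula $\lim_{t\to 0}\mathrm{tr}(e^{-t\Delta}D^{-1}B)=\int_X \mathrm{tr}(JB)$, where $J=\frac{\iota}{2\pi}dz\,(\beta-\alpha^{\ast}-\frac{1}{2}\partial_z\log\rho)$ is built from the finite part $\beta$ of the Green's function on the diagonal. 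Once the first variation is in this closed local form, the second variation is purely algebraic: only $\alpha^{\ast}$ fails to be holomorphic in $w$, so $\partial_{\bar w}J=-\frac{\iota}{2\pi}(\partial_w D)^{+}$ and the Kähler form drops out immediately. To complete your version you would either have to prove this limit formula (which is where the Green's-function analysis actually lives) or carry out the direct second variation and exhibit the cancellation explicitly; as written, the technical heart of the proof is named but not supplied.
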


\begin{proof}
We know that for a holomorphic line bundle the curvature form of the holomorphic connection is given by $\bar{\partial} \partial log \vert \sigma \vert^2$ where $\sigma$ is any local holomorphic section. Before calculating the curvature form we recall a few identities (see \cite{Q} for details). In terms of local orthonormal frame of $E$ we have the following local equations:

\begin{itemize}
\item{$ds^2=\rho(z) \vert dz \vert^2$;}\
\item{$D=d \bar{z}(\partial_{\bar{z}}+ \alpha)$, where D is a $\bar{\partial}$ operator;}\
\item{$\nabla=dz(\partial _z - \alpha^{\ast})+d\bar{z}(\partial_{\bar{z}}+ \alpha)$;}\
\item{$G(z,z')=\frac{\iota dz'}{2\pi (z-z')}[1+(z-z')\beta(z')-{(\bar{z}-\bar{z}')} \beta(z')+...];$}\
\end{itemize}

\par

Let $J=\frac{\iota}{2\pi} dz(\beta - \alpha^{\ast} - \frac{1}{2}\partial_{z}log(\rho))$. In \cite{Q} Quillen gives a different definition for $J$ and then uses the above local equations to get this expression. We would need only this expression for the calculation of the curvature form. We fix a one-parameter family $D_w$ of invertible $\bar{\partial}$ operators depending holomorphically on the complex variable $w$. Since invertible operators are dense in $\mathcal{E}nd(E)$, enough to show that the curvature form and the K{\"a}hler form coincide over such a family.

\par

We have the curvature form $\bar{\partial}\partial log \vert \sigma \vert^2=d \bar{w} dw\frac{\partial^2}{\partial_{\bar{w}} \partial_w} \zeta'(0)$,  where $\zeta(s)=tr(\Delta^{-s})$ and $\Delta=D^{\ast}.D$. Now,\\
$-\partial_w \zeta(s)=s.tr(\Delta^{-s-1} \partial_w \Delta)\\
=s.tr(\Delta^{-s}D^{-1} \partial_w D)$ ($\partial_w D^{\ast}=0$, as $D$ depends holomorphically on $w$)\\ 
$=\frac{s}{\Gamma(s)} \int_{0}^{\infty}tr(exp(-t\Delta) D^{-1}\partial_w D)t^{s-1}dt \\
=s.[\int_M tr(J \partial_w D) +O(s)] $ as $s \to 0$. \\
The last equality follows from Theorem 2 of \cite{Q}, which states that for $B \in \mathcal{B}$ we have the equality \\
\centerline{$\lim_{t \to 0}tr(exp(-t\Delta)D^{-1}B)=\int_M tr(JB)$.} 

\par

Hence it follows that $-\partial_w \zeta ' (0)=\int tr(J \partial_w D).$ We also have the equalities:\\

\centerline{$\partial_{\bar{w}}J=-\frac{\iota}{2\pi}dz \partial_{\bar{w}} \alpha^{\ast}=-\frac{\iota}{2\pi}(\partial_w D)^+.$}

The first equality holds because only $\alpha^{\ast}$ is not holomorphic in $w$. The second equality holds because $\partial_w D=\partial_w \alpha d\bar{z}.$ 

\par

Hence finally we have:\\

\centerline{$\frac{\partial^2}{\partial_{\bar{w}} \partial_w} \zeta'(0)=-\int \partial_{\bar{w}}tr(J \partial_w D)=\int \frac{\iota}{2\pi} tr[(\partial_w D)^+ \partial_w D] $.}

Identifying the vector space $\mathcal{B}$ with its tangent space we get the desired result.
\end{proof}

\par

Next we deduce the following result from the theorem which we would be using later. For a Riemann surface $X$, let $M_{V}$ be the moduli of stable vector bundle of $rank=r$ and $degree=d$ on $X$. Since a vector bundle can be seen as a principal $GL_n$-bundle, we get a universal projective bundle $P_V$ over $X \times M_V$. Let $ad(P_V)$ be the adjoint bundle of the $PGL_n$ bundle $P_V$.  

\begin{corollary}
The curvature $K(ad(P_V))$ of the holomorphic determinant bundle with respect to the projection $p:X \times M_{V} \longrightarrow M_{V}$ is equal to $4 \pi \iota r \Omega _V$, where $\Omega _V$ is a $(1,1)$ form on $M_{V}$ defined as in the beginning of the section.   
\end{corollary}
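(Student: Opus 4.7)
The plan is to apply Quillen's theorem (Theorem 4.12) to the vector bundle $ad(P_V)$ on the family of Riemann surfaces $X \times M_V \to M_V$, and then translate the resulting $L^2$-form on $M_V$ into the form $\Omega_V$. At a point $[E] \in M_V$ I would identify $T_{[E]} M_V$ with $H^1(X, End(E))$ via Kodaira-Spencer and represent tangent vectors by harmonic $(0,1)$-forms $B$ with values in $End(E)$. The infinitesimal deformation of $ad(P_V)|_{X \times \{[E]\}} = End_0(E)$ induced by the deformation $B$ of $E$ is the adjoint action $B \mapsto ad(B) = [B, \cdot\,] \in \Omega^{0,1}(X, End(End_0(E)))$, which is the Kodaira-Spencer map for the family $ad(P_V)$. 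Note that $ad$ kills the scalar part of $B$, consistent with the fact that scalar deformations of $E$ do not affect $End_0(E)$.

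Applying Quillen's theorem to the family $ad(P_V)$, the curvature of the determinant line bundle evaluated on a tangent direction $B$ of $M_V$ equals
\[
\int_X \frac{\iota}{2\pi}\, tr_{End_0(E)}\!\big(ad(B)^{\ast}\, ad(B)\big),
\]
where $ad(B)^{\ast} = ad(B^{\ast})$ is the hermitian conjugate induced from a fixed hermitian metric on $E$. I would then invoke the Killing form identity $tr_{\mathfrak{sl}_r}(ad(a)\,ad(b)) = 2r\, tr(ab)$ for $a,b \in \mathfrak{sl}_r$, which applied pointwise gives $tr_{End_0(E)}(ad(B)^{\ast}\,ad(B)) = 2r\, tr(B_0^{\ast}\, B_0)$, with $B_0$ the traceless part of $B$. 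The curvature thus reduces to $2r \int_X \frac{\iota}{2\pi}\, tr(B_0^{\ast} B_0)$.

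Finally I would identify $\int_X \frac{\iota}{2\pi}\, tr(B_0^{\ast} B_0)$ with $2\pi\iota\, \Omega_V(B, \overline{B})$. This amounts to matching the hermitian pairing on $H^1(X, End_0(E))$ coming from Quillen's $L^2$ inner product against the pairing defining $\Omega_V$ (cup product combined with the trace on $End_0(E) \otimes End_0(E)$), using harmonic representatives and the Serre duality isomorphism $H^1(X, End_0(E)) \cong H^0(X, K_X \otimes End_0(E))^{\ast}$. This yields $K(ad(P_V)) = 4\pi\iota\, r\, \Omega_V$. The main obstacle is precisely this last identification: the factor of $2r$ emerges cleanly from the Killing form, but tracking the remaining $2\pi\iota$ through the Hodge-theoretic matching of Quillen's $L^2$ pairing with the cup-product pairing defining $\Omega_V$ is the delicate part of the argument.
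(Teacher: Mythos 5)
Your proposal is correct and follows essentially the same route as the paper: both reduce the statement to Quillen's curvature theorem (Theorem 5.5) applied to the family of $\bar{\partial}$-operators on the adjoint bundle over $\mathcal{A}_s$, descended to $M_V$. The only difference is that the paper outsources the final normalization to Proposition 2.9 of \cite{B}, whereas you unpack that step yourself via the Kodaira--Spencer map $B \mapsto ad(B)$ and the Killing-form identity producing the factor $2r$, leaving (as you candidly note) the same matching of the $L^2$ pairing with the cup-product form $\Omega_V$ that the cited proposition supplies.
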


\begin{proof}
For any vector bundle $E$ over $X$, let $\mathcal{A}_s$ be the collection of stable holomorphic structures on $E$. Then $\mathcal{A}_s$ would be an open subset of $\mathcal{A}$, which is the collection of all complex structures on $E$. There is a natural action of the complex gauge group $\mathcal{PG}=Aut(E)/\mathbb{C}^{\ast}$ on $\mathcal{A}$, which, when restricted to $\mathcal{A}_s$ is a free action. Here $Aut(E)$ is the group of smooth automorphisms of $E$. Note that $Aut_{hol}(E)$, the holomorphic automorphisms of $E$ with a fixed stable structure is precisely $\mathbb{C}^{\ast}$. The quotient space of $\mathcal{A}_s$ by the action $\mathcal{PG}$ is $M_{V}$. 

\par

The Quillen determinant line bundle on $\mathcal{A}_s$ descends down to a holomorphic line bundle $L$ on $M_{V}$. This is true because the action of $Aut(E)/\mathbb{C}^{\ast}$ on $\mathcal{A}_s$ lifts to give an action on the Quillen determinant line bundle. The line bundle $L$ inherits a natural Quillen metric from the Quillen metric of the determinant bundle over $\mathcal{A}_s$. Using the results of Bismut-Gillet-Soul{\'e} we know that the construction of the determinant bundle by Quillen and the determinant bundle of the direct image of $ad(P_V)$ are isomorphic as holomorphic line bundles. Using Theorem 5.5 we see that the curvature of the holomorphic line bundle equals to a particular K\"{a}hler form on $M_V$. One can calculate (Proposition 2.9, \cite{B}) that the metric induced on $M_V$ from the Quillen metric on $\mathcal{A}_s$ is the same as $4 \pi \iota r \Omega_V$ . Hence we have $K(ad(P_V))=4 \pi \iota r \Omega_V$.
\end{proof}

\par

We note down a simple lemma which will used in the proof of Theorem 4.1 .

\begin{lemma}
Let  $X$ be a complex manifold. $G,H$ be linear algebraic groups and $f:G \longrightarrow H$ be a map of linear algebraic groups. Let $P$ be a principal $G-$bundle on $X$. Let $\mathfrak{g,h}$ be the lie algebras of $G,H$ respectively. Then the following diagram commutes:
\[
\begin{tikzcd}
H^i(X,P(\mathfrak{g})) \times H^i(X,P(\mathfrak{g})) \arrow{r}{\cup} \arrow[swap]{d}{f_{\ast}} & H^{2i}(X,P(\mathfrak{g})) \arrow{d}{f_{\ast}}\\
H^i(X,P(\mathfrak{h})) \times H^i(X,P(\mathfrak{h})) \arrow{r}{\cup} & H^{2i}(X,P(\mathfrak{h}))
\end{tikzcd} 
\]
\end{lemma}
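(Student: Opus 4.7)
The plan is to deduce the lemma from the naturality of cup product for sheaf cohomology, once we observe that the relevant Lie-bracket pairings on $P(\mathfrak{g})$ and $P(\mathfrak{h})$ are compatible via $f_{\ast}$.

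First I would spell out what $f_{\ast}$ really is. The group homomorphism $f:G \longrightarrow H$ induces a Lie algebra homomorphism $df:\mathfrak{g}\longrightarrow\mathfrak{h}$. Because $f$ intertwines the conjugation actions (i.e. $f(gxg^{-1}) = f(g)f(x)f(g)^{-1}$), the differential $df$ is $G$-equivariant with respect to the adjoint action of $G$ on $\mathfrak{g}$ and on $\mathfrak{h}$ (the latter via $f$). Hence $df$ induces a morphism of associated vector bundles $F:P(\mathfrak{g})\longrightarrow P(\mathfrak{h})$, and $f_{\ast}:H^{i}(X,P(\mathfrak{g}))\longrightarrow H^{i}(X,P(\mathfrak{h}))$ is by definition the map $F_{\ast}$ on cohomology.

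Next, the cup products in the diagram are of the form
\centerline{$H^{i}(X,P(\mathfrak{g}))\times H^{i}(X,P(\mathfrak{g}))\xrightarrow{\smile} H^{2i}(X,P(\mathfrak{g})\otimes P(\mathfrak{g}))\xrightarrow{[\,,\,]_{\ast}} H^{2i}(X,P(\mathfrak{g})),$}
where $[\,,\,]_{\mathfrak{g}}:\mathfrak{g}\otimes\mathfrak{g}\longrightarrow\mathfrak{g}$, being $\mathrm{Ad}$-equivariant, descends to a bundle map $P(\mathfrak{g})\otimes P(\mathfrak{g})\longrightarrow P(\mathfrak{g})$, and analogously on the $\mathfrak{h}$-side. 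The key algebraic identity to invoke is that any Lie algebra homomorphism preserves brackets, $df([u,v]_{\mathfrak{g}})=[df(u),df(v)]_{\mathfrak{h}}$; combined with the $G$-equivariance from the previous step, this shows that the sheaf-level square
\[
\begin{tikzcd}
P(\mathfrak{g})\otimes P(\mathfrak{g}) \arrow{r}{[\,,\,]_{\mathfrak{g}}} \arrow[swap]{d}{F\otimes F} & P(\mathfrak{g}) \arrow{d}{F}\\
P(\mathfrak{h})\otimes P(\mathfrak{h}) \arrow{r}{[\,,\,]_{\mathfrak{h}}} & P(\mathfrak{h})
\end{tikzcd}
\]
commutes.

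Finally I would invoke the standard naturality of the cup product: for any morphism of sheaves $\varphi:\mathcal{F}\longrightarrow\mathcal{G}$, the induced map on cohomology satisfies $\varphi_{\ast}(\alpha\smile\beta)=\varphi_{\ast}\alpha\smile\varphi_{\ast}\beta$ in $H^{\ast}(X,\mathcal{G}\otimes\mathcal{G})$ (for the map $\varphi\otimes\varphi$), a purely formal consequence of the \v{C}ech or Dolbeault description of $\smile$. Combining this with the commutative bracket square above gives exactly the commutative diagram in the statement. I do not anticipate any genuine obstacle: the content is essentially functoriality of $\smile$ together with the observation that $df$ is a bracket-preserving, $G$-equivariant map. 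The only thing to be careful about is making the conventions for the cup product consistent (in particular, that the Lie-bracket pairing is the one being implicitly used in $\cup$), which is just a matter of fixing notation at the outset of the proof.
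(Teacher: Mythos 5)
Your proposal is correct and follows exactly the route the paper takes: the paper's entire proof is the one-line remark that the lemma ``follows from the functoriality of the cup product,'' and your argument simply makes that functoriality explicit (the $G$-equivariant, bracket-preserving map $df$ inducing a morphism of associated bundles, the commuting coefficient-pairing square, and naturality of $\smile$). Your closing caveat about fixing which pairing is implicit in $\cup$ is a reasonable point of care that the paper itself glosses over.
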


\begin{proof}
Follows from the functoriality of the cup product.
\end{proof}

\begin{proof}
\textbf{(of Theorem 4.1)} The result of Bismut-Gille-Soule applied to our situation says $K(\Theta)=2 \pi \iota . p_{2_{\ast}}( Td^2.Ch^0 +Td^1.Ch^1+Td^0.Ch^2)$, where $p_{2_{\ast}}$ denotes integrating along the fibres which are curves. Note that since we are integrating along a curve , which has real dimension=2, the degree 2 component of $2 \pi \iota \int_{Z} Td(-R^Z/2 \pi \iota) Tr[exp(-L^{\zeta}/2 \pi \iota)]$ is precisely $2 \pi \iota . p_{2_{\ast}}( Td^2.Ch^0 +Td^1.Ch^1+Td^0.Ch^2)$.

\par

We recall that $Ch^i$ are polynomials in the curvature form corresponding to the holomorphic connection on $ad(\mathcal{P})$. Also recall that $Td^i$ are polynomials in the curvature of the hermitian connection on $T^{(1,0)}Z$. 

\par

Since $ad(\mathcal{P})$ is an adjoint bundle, $Ch^1=0$. Also we have $Td^0=1$ and $Ch^0=r$. We observe that, since $Td^i$ depends on the curvature of the hermitian connection on $T^{(1,0)}Z$, $Td^2$ depends only on the fibre and not on the holomorphic vector bundle $ad(\mathcal{P})$. 

\par

Now with the aim of calculating $Td^2$ and $Ch^2$ we note a few more properties of them. Let $\alpha =(t,\rho) \in \mathcal{T}_g \times R$. Let $v,w \in T_t \mathcal{T}_g, T_{\rho}R$ respectively. Let $v', w'$ be the image of $v,w$ respectively in $T_{\alpha}\mathcal{M}$. Recall that $T_{\alpha}\mathcal{M}=T_{t}\mathcal{T}_g \oplus T_{\rho}M_{t}$, where $M_{t}$ is the moduli of $G$-bundles on $X$ corresponding to the complex structure $t \in \mathcal{T}_g$. Then we have : 

\begin{lemma}
$p_{2_{\ast}}Td^2(w')=0$.
\end{lemma}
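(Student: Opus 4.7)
The plan is to show that $p_{2\ast}Td^2$ is the $\sigma$-pullback of a form on $\mathcal{T}_g$; the conclusion then follows because $w'$ lies in the kernel of $d\sigma$. The heart of the argument is that $Td^2$ depends only on the relative tangent bundle of $p_2$, which in turn is pulled back from $\mathcal{C}_g$.

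First, I would form the Cartesian square
\[
\begin{tikzcd}
\mathcal{C}_g \times_{\mathcal{T}_g} \mathcal{M} \arrow{r}{p_1} \arrow{d}{p_2} & \mathcal{C}_g \arrow{d}{\pi} \\
\mathcal{M} \arrow{r}{\sigma} & \mathcal{T}_g
\end{tikzcd}
\]
and note the canonical identification $T^{(1,0)}Z \cong p_1^{*} T_{\mathcal{C}_g/\mathcal{T}_g}$ of the vertical holomorphic tangent bundle of $p_2$ with the pullback of the relative tangent bundle of $\pi$. The fibrewise Poincar\'e metrics $g_{X_t}$ fit together into a smooth hermitian metric on $T_{\mathcal{C}_g/\mathcal{T}_g}$, and the hermitian metric used to compute $R^Z$ in Theorem 5.5 is its pullback via $p_1$. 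Hence $R^Z = p_1^{*} R$, where $R$ is the curvature of the Chern connection on $T_{\mathcal{C}_g/\mathcal{T}_g}$ over $\mathcal{C}_g$.

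Since $Td^2$ is a universal polynomial in the curvature, the identity $R^Z = p_1^{*}R$ yields $Td^2(-R^Z/2\pi\iota) = p_1^{*}\tau$, with $\tau := Td^2(-R/2\pi\iota) \in \Omega^{4}(\mathcal{C}_g)$. Applying the base change formula for integration along the fibres of the holomorphic submersion $p_2$, one obtains
\[
p_{2\ast}\bigl(p_1^{*}\tau\bigr) = \sigma^{*}\bigl(\pi_{\ast}\tau\bigr),
\]
so $p_{2\ast}Td^2 = \sigma^{*}(\pi_{\ast}\tau)$ is the $\sigma$-pullback of the $(1,1)$-form $\pi_{\ast}\tau$ on $\mathcal{T}_g$.

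Finally, $w' \in T_\alpha \mathcal{M}$ is the image of $w \in T_\rho R$, so under the decomposition $T_\alpha \mathcal{M} = T_t \mathcal{T}_g \oplus T_\rho M_{t}$ the vector $w'$ lies entirely in the second summand, giving $d\sigma(w') = 0$. Therefore any contraction of $\sigma^{*}(\pi_{\ast}\tau)$ with $w'$ vanishes, yielding $p_{2\ast}Td^2(w') = 0$.

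The main obstacle is of a verification character rather than of genuine difficulty: one must confirm that the family of Poincar\'e metrics varies smoothly in $t \in \mathcal{T}_g$ so that it genuinely assembles into a hermitian structure on $T_{\mathcal{C}_g/\mathcal{T}_g}$, and one must invoke the base change formula for pushforward of smooth forms along the holomorphic submersion $p_2$ in the above Cartesian square. Both are standard, and once they are in place the lemma reduces to the observation that $w'$ lies in $\ker d\sigma$.
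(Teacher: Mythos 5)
Your proposal is correct and follows essentially the same route as the paper: the paper likewise argues that the fibre (and hence the curvature $R^Z$ of the hermitian connection on $T^{(1,0)}Z$) depends only on the coordinates of $S$ and $\mathcal{T}_g$ and not on $R$, so that $p_{2\ast}Td^2$ is the pullback of a $2$-form on $\mathcal{T}_g$ and therefore annihilates $w'$. Your version merely makes the same argument precise by exhibiting $T^{(1,0)}Z$ as $p_1^{*}T_{\mathcal{C}_g/\mathcal{T}_g}$ and invoking the base-change formula $p_{2\ast}\circ p_1^{*}=\sigma^{*}\circ\pi_{\ast}$.
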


\begin{proof}
The 2-form $p_{2_{\ast}}Td^2$ doesn't depend on the coordinates of $R$. This is because the coordinates of the fibre are only dependent on the coordinates of $S$ and $\mathcal{T}_g$ and hence , in particular, the curvature of the hermitian connection on $T^{(1,0)}(Z)$ is independent of the coordinates of $R$. Hence we have that $p_{2_{\ast}}Td^2$  is a pullback of a 2-form on $\mathcal{T}_g$. This implies that $p_{2_{\ast}}Td^2(w')=0$.
\end{proof}

\begin{lemma} 
$p_{2_{\ast}}Ch^2(v')=0$.
\end{lemma}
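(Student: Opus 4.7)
This lemma is the exact counterpart of Lemma 5.8 with the roles of $\mathcal{T}_g$ and $R$ interchanged: the goal is to show that the $(1,1)$-form $p_{2\ast}Ch^2$ on $\mathcal{M}$ is a pullback from $R$ via the projection $q$, and therefore annihilates any vector $v'$ coming from the Teichm\"uller factor.

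I would first reduce $Ch^2$ to $-c_2(ad(\mathcal{P}))$. Since $G$ is semisimple, the adjoint representation $\mathrm{Ad}\colon G/Z \to GL(\mathfrak{g})$ preserves the Killing form, so it factors through $SL(\mathfrak{g})$; thus $c_1(ad(\mathcal{P}))=0$, which gives $Ch^1=0$, and Lemma 5.3 then yields $Ch^2=-c_2(ad(\mathcal{P}))$. Next I would exploit that $\mathcal{T}_g$ is contractible, so the family $\mathcal{C}_g \to \mathcal{T}_g$ is $C^\infty$-trivializable; this produces a diffeomorphism $\mathcal{C}_g \times_{\mathcal{T}_g} \mathcal{M} \simeq S \times \mathcal{T}_g \times R$ under which the smooth $K/Z$-bundle underlying $\mathcal{P}$ is the pullback of $P^\infty$ of Lemma 4.1 from $S \times R$. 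Choosing any smooth unitary connection on $ad(P^\infty)$ and pulling it back gives a connection $\tilde\nabla$ on $ad(\mathcal{P})$ whose curvature $\tilde F$ has no $dt$ or $d\bar t$ components in the product coordinates $(s,t,\rho)$. Consequently $c_2(\tilde\nabla)$ has no Teichm\"uller factor, and $p_{2\ast}c_2(\tilde\nabla)$ is a pullback from $R$, so it annihilates $v'$ automatically.

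The Bismut--Gillet--Soul\'e formula is stated, however, using the Chern connection $\nabla$ constructed in Lemma 4.5, not $\tilde\nabla$. By Chern--Weil transgression, $c_2(\nabla)-c_2(\tilde\nabla)=d\,\mathrm{CS}(\tilde\nabla,\nabla)$ for a $3$-form $\mathrm{CS}$, and since the fibres of $p_2$ are closed Riemann surfaces, pushforward commutes with $d$, giving
\[
p_{2\ast}c_2(\nabla)=p_{2\ast}c_2(\tilde\nabla)+d\bigl(p_{2\ast}\mathrm{CS}(\tilde\nabla,\nabla)\bigr).
\]
The first summand annihilates $v'$ by the previous paragraph, so the lemma reduces to showing that $p_{2\ast}\mathrm{CS}$ is itself a pullback from $R$ (so its exterior derivative is too). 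To establish this I would use the holomorphic section $\tilde s_\rho \colon \mathcal{C}_g \hookrightarrow \mathcal{C}_g \times_{\mathcal{T}_g} \mathcal{M}$ of Lemma 3.1: along a slice $\mathcal{T}_g \times \{\rho\}$ the Chern connection $\nabla$ coincides fibrewise with the flat connection determined by $\rho$, so the Chern--Simons integrand has no components mixing the fibre direction with $T^\ast \mathcal{T}_g$. Integration over the fibre then yields a $1$-form on $\mathcal{M}$ depending only on the $R$-coordinates, proving the lemma.

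The main obstacle is precisely this form-level analysis of the transgression. The cohomological statement that $p_{2\ast}c_2$ is pulled back from $R$ is immediate from the smooth-pullback structure of $\mathcal{P}^\infty$; the delicate point is the explicit form-level cancellation, which relies on the specific construction of $\nabla$ via the canonical connection on $\mathcal{C}_g \times \mathrm{Conn} \times K$ and on the flat rigidity of $\tilde s_\rho^\ast \mathcal{P}$ along each Teichm\"uller slice.
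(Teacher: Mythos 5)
Your overall strategy---compare the Chern connection $\nabla$ with an auxiliary connection pulled back from $S\times R$, and control the discrepancy by a Chern--Simons transgression---is genuinely different from the paper's argument, but as written it has a gap exactly where you locate the difficulty. The paper's proof is a one-step direct computation: the connection on $\mathcal{P}(G/Z)$ restricted to each slice $\mathcal{C}_g\times\rho$ is flat, so the curvature $F$ of $\nabla$ satisfies $F(v',u)=0$ for every $u$ tangent to $\mathcal{C}_g\times\rho$ (in particular for fibre directions $e_1,e_2$) as well as $F(e_1,e_2)=0$; expanding $Ch^2\sim\mathrm{tr}(F\wedge F)$ on the quadruple $(v',w,e_1,e_2)$ one sees every term contains such a vanishing factor, so $p_{2\ast}Ch^2(v',\cdot)=0$ with no auxiliary connection needed. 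You invoke precisely this flatness in your last step anyway, so the detour through $\tilde\nabla$ buys nothing: the hard input reappears unchanged inside the transgression term.

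The concrete gap is your claim that $p_{2\ast}\mathrm{CS}(\tilde\nabla,\nabla)$ is a pullback from $R$ when $\tilde\nabla$ is the pullback of an \emph{arbitrary} smooth unitary connection on $ad(P^{\infty})$. The relevant component of $\mathrm{CS}$ is the one of type (fibre, fibre, $\mathcal{T}_g$) in $\int_0^1\mathrm{tr}\left(a\wedge F_t\right)dt$ with $a=\nabla-\tilde\nabla$. For a general choice of $\tilde\nabla$ the difference $a$ has nonzero fibre components, and even though both endpoint curvatures have vanishing mixed (fibre, $\mathcal{T}_g$) parts, the interpolated curvature $F_t$ acquires such parts through the quadratic term $t(1-t)\,a\wedge a$; likewise $(F_t)_{\mathrm{fibre},\mathrm{fibre}}$ need not vanish for $0<t<1$. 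So the integrand does not vanish pointwise and the fibre integral can have a $\mathcal{T}_g$-component. The argument can be repaired by choosing $\tilde\nabla$ to extend the partial flat connection $\nabla(\mathcal{F})$ of the paper's construction, which forces $a$ to annihilate fibre directions and hence kills both offending terms---but you must say this, and at that point you have reproduced the direct argument with extra machinery on top. I would recommend dropping the transgression entirely and arguing on $Ch^2(\nabla)$ itself.
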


\begin{proof} 
Let $i:\mathcal{C}_g \times \rho \longrightarrow \mathcal{C}_g \times_{\mathcal{T}_g} \mathcal{M}$ be the inclusion. Since the connection to the projective bundle, constructed earlier, is flat when restricted to $\mathcal{C}_g \times \rho$        we have \\

\centerline {$Ch^2(v')=0$} 

where $v'$ by abuse of notation will also denote the image of $v$ in the tangent space of $\mathcal{C}_g \times \rho$. Since $Ch^2(v')=0$ for all tangent vectors of $\mathcal{T}_g$, we conclude that $Ch^2$ is a pullback of a 4-form on $S \times R$. Hence $p_{2_{\ast}}Ch^2$ is a pullback of a 2-form on $R$. Hence we conclude that  $p_{2_{\ast}}Ch^2(v')=0$ for all $v' \in T_t{\mathcal{T}_g}$.
\end{proof}

\par

We conclude from these observations that $K(\Theta)(v',w')=0$ where $v'$ and $w'$ are as described above in the conditions of the above two lemmas. We also conclude that:\

\begin{itemize}
\item{$K(\Theta)$ and $2 \pi \iota p_{2_{\ast}}.rTd^2$ coincide on $\mathcal{T}_g \times \rho$.}\
\item{$ K(\Theta)$ and $2 \pi \iota p_{2_{\ast}}.Ch^2$ coincide on $t \times R$.}
\end{itemize}

\par 

To calculate the first case we observe that $Td^2$ does not depend on the adjoint bundle. Hence if we replace the adjoint bundle by the trivial bundle with the trivial metric, $Td^2$ remains the same. The only difference which occurs while caculating $K(\Theta)$is that $Ch^0=1$ for the trivial bundle and $Ch^0=r$ for the adjoint bundle. Hence \\

\centerline{$K(\Theta) \mid_{\mathcal {T}_g}=r.K(\Theta)_{triv}$.}

  $K(\Theta)_{triv}$ is the curvature of the determinant line bundle on $\mathcal{T}_g$ for the trivial bundle on $\mathcal{C}_g$. We know that $K(\Theta)_{triv}=(\iota /6\pi) \omega$ (see \cite{ZT}). Recall that $\omega$ is the K\"{a}hler $(1,1)$ form corresponding to the \textit{Weil-Petersson} metric on  $\mathcal{T}_g$. Hence $K(\Theta)= (r.\iota /6\pi). \sigma^{\ast} \omega$ when restricted to $\mathcal{T}_g \times \rho$.

\par

To get a complete expression of $K(\Theta)$, what remains is, to calculate $K(\Theta)$ when restricted to $t \times R$ ,where $t \in \mathcal{T}_g$. As defined earlier, let $X, M, M_{V}$ denote the Riemann surface (corresponding to the complex structure $t$), moduli of stable $G-$bundles on $X$ and the moduli of vector bundles of rank$=r$ on $X$ respectively. Let $f:M_G \longrightarrow M_V$ be the map which takes a principal bundle $\rho$ to the adjoint vector bundle $ad(\rho)$. 

\par

To calculate $K(\Theta)$ restricted to $t \times M$ we need to calculate $p_{2_{\ast}} Ch^{2}(ad(P))$. Recall that $P$ is the universal principal $G/Z$-bundle which had been constucted over $X \times M$. To calculate $p_{2_{\ast}} Ch^{2}(ad(P))$ we observe that:

\begin{itemize} 
\item{$Ch^2(ad(P) \otimes ad(P))=-2r.Ch^2(ad(P))$. Using lemma 5.2 and lemma 5.3.} 
\item{Since we are integrating the chern form along the fibre we can as well take the corresponding chern class. Hence $q_{2_{\ast}}Ch^2(.)$ would not depend on the choice of a metric, and hence not on the choice of a connection on the vector bundle as well.} 
\end{itemize}

\par

Next we consider the vector bundle $\mathcal{E}nd$, the endomorphism bundle over $X \times M_{g,V}$. We know that $f^*(\mathcal{E}nd)=ad(P)^2$  in the sense that the Einstein-Hermitian connection pulls back to the Einstein-Hermitian connection. Where we recall $f:M \longrightarrow M_{V}$ to be the adjoint map i.e. the map takes a principal bundle to its adjoint bundle. Let $Det(\mathcal{E}nd)$ be the determinant bundle on $M_{V}$ corresponding to the vector bundle $\mathcal{E}nd$ and $K(\mathcal{E}nd)$ be its curvature. 

\par

We have $f^{\ast}(Det(\mathcal{E}nd))=Det(ad(P)) \otimes Det(ad(P))$, since we know that $Det(.)$ behaves well with respect to pull back. More precisely, suppose we have a commutative diagram:

\[
\begin{tikzcd}
X_1 \arrow{r}{f_X} \arrow{d} & X_2 \arrow{d}\\
Y_1 \arrow{r}{f_Y} & Y_2
\end{tikzcd}
\]
and let $E$ be a vector bundle on $X_2$. Then $f_{Y}^{\ast}(Det(E))=Det(f_{X}^{\ast}(E))$. 

\par

Hence we have $f^{\ast}(K(\mathcal{E}nd)) = -2r.K(\Theta)|_{t \times R}$ as 2-forms over $M$.

\par

So we reduce the problem to calculating $K(\mathcal{E}nd)$. We observe that\\ 

\centerline{$\mathcal{E}nd=ad(P_V) \oplus L$,} 

where $L$ is the trivial line bundle $X \times \mathbb{C}$ and $P_V$ is the universal projective bundle. 

\par

On $\mathcal{E}nd$ we give the connection   $D$ induced from the Einstein-Hermitian connection $D_1$ on $P_V$ and the connection $D_2$ on $L$ defined by $D_2(f.c)=df.c$, where $f$ is a smooth function on $X$ and $c$ is a constant section. Since $D$ and $D_1$ have the same curvature form, $\mathcal{E}nd$ and $P_V$ have the same chern form $Ch^2$. The corresponding chern class does not depend on the choice of the connection. Hence using the formula of Bismut-Gillet-Soul{\'e}, for calculating the curvature of the determinant bundle, we conclude that $K(\mathcal{E}nd)=K(ad(P_V))=4 \pi r \iota . \Omega _V$ . 

\par

We have $f^*(\Omega _V)=\Omega$ which follows from lemma 5.7 by taking $H=Aut(\mathfrak{g})$, where the map from $G$ to $Aut(\mathfrak{g})$ is given by adjoint action. Hence we finally have \\
\centerline{$K(H_Q)=-2 \pi  \iota. q^*\Omega+ \frac{r}{6 \pi} \iota. \sigma^* \omega$.}
 
\end{proof}

\end{document}